\documentclass[
submission
]{dmtcs-episciences}


\usepackage[utf8]{inputenc}
\usepackage{subfigure}
\usepackage{amsmath,amssymb,amsthm}
\usepackage{epsfig}
\usepackage{color}
\usepackage{enumerate}
\usepackage{algorithm}
\usepackage{algorithmic}
\usepackage{hyperref}
\usepackage[T1]{fontenc}

%

\newtheorem{theorem}{Theorem}[section]

\newtheorem{corollary}[theorem]{Corollary}

\newtheorem{proposition}[theorem]{Proposition}

\theoremstyle{definition}

\theoremstyle{remark}


\author{Doost Ali Mojdeh\affiliationmark{1}
  \and Iztok Peterin\affiliationmark{2,3}\thanks{Partially supported by the Slovenian research agency under the grants P1-0297, J1-1693 and J1-9109.}
  \and Babak Samadi\affiliationmark{1}\thanks{This work was done during the visit of the University of Cadiz, supported by University of Mazandaran and Ministry of Science, Research and Technology of Iran, under the program Ph.D Research Opportunity.}
	\and Ismael G. Yero\affiliationmark{4}}
\title[(Open) packing number of some graph products]{(Open) packing number of some graph products}
\affiliation{
  Department of Mathematics, University of Mazandaran, Babolsar, Iran\\
  University of Maribor, FEECS, Maribor, Slovenia\\
	IMFM, Ljubljana, Slovenia\\
  Universidad de C\'{a}diz, Algeciras, Spain}
\keywords{Packing number, open packing number, graph products}
\received{2019-1-23}
\revised{2020-6-19}
\accepted{2020-6-22}
\begin{document}
\publicationdetails{22}{2020}{4}{1}{5100}
\maketitle
\begin{abstract}
The packing number of a graph $G$ is the maximum number of closed neighborhoods of vertices in $G$ with pairwise empty intersections. Similarly, the open packing number of $G$ is the maximum number of open neighborhoods in $G$ with pairwise empty intersections. We consider the packing and open packing numbers on graph products. In particular we give a complete solution with respect to some properties of factors in the case of lexicographic and rooted products. For Cartesian, strong and direct products, we present several lower and upper bounds on these parameters.
\end{abstract}

\section{Introduction}

The packing number $\rho(G)$ and the open packing number $\rho_o(G)$ of a graph $G$ are natural lower bounds of the domination number $\gamma(G)$ and the total domination number $\gamma_t(G)$ of $G$, respectively.  One of the first results of that type is from Meir and Moon \cite{MeMo}, where it was shown that $\rho(T)=\gamma(T)$ for every tree $T$ (in a different notation). It is easy to see that, while the numbers are the same, the sets that yield both $\rho(T)$ and $\gamma(T)$ are often different. See also \cite{b,hr} for some results of that type where they ``unfortunately'' only lie in a shadow of domination and total domination.

In the last decade the packing number became more interesting for itself, and not only in connection with the domination number. Some interesting examples are as follows. The relationship between the packing number and the maximal packings of minimum cardinality, called also the lower packing number, is investigated in \cite{hsa0}. In \cite{MSKG}, a connection between the packing number and the double domination in the form of an upper bound is presented. Graphs for which their packing number equals the packing number of their complement are described in \cite{Dutt}. In \cite{HeLoRa}, it was shown that the domination number can also be  bounded from above by the packing number multiplied by the maximum degree of a graph. Asymptotic bounds for the maximum and the minimum number of packings in graphs of fixed order are established in \cite{JSRz}. A different approach was taken in \cite{BoPe}, where graphs with unique maximum packing are treated.

A generalization of packings presented in \cite{gghr} is called $k$-limited packing, where the closed neighborhood of every vertex can have at most $k$ vertices in a $k$-limited packing set $S$. They exhibited some real-world applications of it to network security, market saturation, NIMBY and codes. A probabilistic approach to some bounds of the $k$-limited packings can be found in \cite{GaZv}. More results on this topic can be found in \cite{msh}. A further generalization, that is, generalized limited packing of the $k$-limited packing (see \cite{DoHiLe}) brings a dynamic approach with respect to the vertices of $G$, where every $v\in V(G)$ can have a different number of neighbors $k_v$ in a generalized limited packing. The problem is NP-complete, but solvable in polynomial time for $P_4$-tidy graphs as shown in \cite{DoHiLe}.

The open packing number was introduced by Henning and Slater in \cite{hs}. They presented theoretical and computational results concerning this parameter of graphs. In \cite{Rall}, the open packing and the total domination equality $\rho_{o}(T)=\gamma_{t}(T)$ was proved for all trees $T$ of order at least two. Some applications of this equality for trees can be found in \cite{Rall} and \cite{s}. As a generalization of the open packing, and a total version of the limited packing, the concept of total limited packing was introduced in \cite{hms}. A subset $S$ of the vertices is called a $k$-total limited packing if the open neighborhood of each vertex has at most $k$ neighbors in $S$.

The class of graphs with $\rho(G)=\gamma(G)$, where both maximum packing sets and minimum dominating sets coincide, is called efficient closed domination graphs. In such a case we also call a minimum dominating set a 1-perfect code. Similarly, we call a graph $G$ an efficient open domination graph, whenever $\rho_o(G)=\gamma_t(G)$. Efficient open domination graphs are well known among graph products, see \cite{KPRT,KuPeYe1}. In particular, in \cite{KPRT}, a method was established concerning how to approach to the efficient open domination graphs among Cartesian products where one factor is fixed. The study of perfect codes in graphs was initiated by Biggs~\cite{biggs-1973}. It was later intensively studied, and for instance, graphs that are both efficient open and efficient closed domination graphs at the same time are discussed in \cite{KPY}.

We twist the roles usually played by (open) packings in investigations, and bring in this work, packing and open packing on the front line and present several upper and lower bounds for the packing and open packing numbers of graph products. In the next section we fix the notation. Then we start with the investigation of packing and open packing numbers of the Cartesian product of graphs. A section that covers the lexicographic, the strong and the direct product follows. We end with a section on the rooted product of graphs.


\section{Preliminaries}

Throughout this paper, let $G$ be a finite simple graph with vertex set $V(G)$ and edge set $E(G)$.
The {\em open neighborhood} $\{u\in V(G): uv\in E(G)\}$ of a vertex $v$ is denoted by $N_G(v)$, and the {\em closed neighborhood} of $v$ is $N_G[v]=N_G(v)\cup \{v\}$. The {\em degree} of a vertex $v$ is $|N_G(v)|$ and is denoted by $\delta_G(v)$. The {\em minimum} and {\em maximum degree} of $G$ are denoted by $\delta(G)$ and $\Delta(G)$, respectively. A subgraph of a graph $G$ induced by $S\subseteq V(G)$ is denoted by $G[S]$.

A set $S\subseteq V(G)$ is a {\em dominating set} if each vertex in $V(G)\backslash S$ has at least one neighbor in $S$. The {\em domination number} $\gamma(G)$  is the minimum cardinality of a dominating set of $G$. We call a dominating set $S$ of a graph $G$ with cardinality $\gamma(G)$ a $\gamma(G)$-set. Similarly, a set $D\subseteq V(G)$ is a {\em total dominating set} if each vertex in $V(G)$ has at least one neighbor in $D$. The {\em total domination number} $\gamma_t(G)$ is the minimum cardinality of a total dominating set of $G$. We call a total dominating set $D$ of a graph $G$ with cardinality $\gamma_t(G)$ a $\gamma_t(G)$-set.

A subset $P\subseteq V(G)$ is a {\em packing} in $G$, if for every pair of distinct vertices $u,v\in P$, $N[u]\cap N[v]=\emptyset$. The {\em packing number} $\rho(G)$ is the maximum cardinality of a packing in $G$. The {\em open packing}, as it is defined in \cite{hs}, is a subset $P\subseteq V(G)$ for which the open neighborhoods of the vertices of $P$ are pairwise disjoint in $G$ (clearly, $P$ is an open packing if and only if $|N(v)\cap P|\leq 1$, for every $v\in V(G)$). The {\em open packing number}, denoted $\rho_{o}(G)$, is the maximum cardinality among all open packings in $G$.

A set $S\subseteq V(G)$ is $k$-{\em independent set} of a graph $G$, if $\Delta (G[S])<k$. The $k$-{\em independence number} of $G$ is the maximum cardinality of a $k$-independent set of $G$.  We denote the $k$-independence number by $\alpha_k(G)$ and call a $k$-independent set of cardinality $\alpha_k(G)$ as an $\alpha_k(G)$-set. This is clearly a generalization of the independence number $\alpha(G)$, which may be considered as the 1-independence  number. We are mainly interested in this work in $\alpha_2(G)$. If $S$ is a 2-independent set of a graph $G$, then $G[S]$ contains isolated edges and isolated vertices. This invariant was first introduced in \cite{FiJa}. For more references we recommend the survey \cite{CFHV}.

For all four standard products of graphs $G$ and $H$ the vertex set of the product is $V(G)\times V(H)$. Their edge sets are defined as follows. In the \emph{Cartesian product} $G\Box H$ two vertices are adjacent if they are adjacent in one coordinate and equal in the other. In the \emph{direct product} $G\times H$ two vertices are adjacent if they are adjacent in both coordinates. The edge set
$E(G\boxtimes H)$ of the \emph{strong product} $G\boxtimes H$ is the union of $E(G\Box H)$ and $E(G\times H)$. Finally, two vertices $(g,h)$ and $(g',h')$ are adjacent in the \emph{lexicographic product}
$G\circ H$ (also $G[H]$) if either $gg'\in E(G)$ or ($g=g'$ and $hh'\in E(H)$). For $h\in V(H)$, $g\in V(G)$ and $\ast \in \{\Box ,\boxtimes ,\times ,\circ \}$, we call $G^{h}=\{(g,h)\in V(G\ast H)\mid g\in V(G)\}$ a $G$-\emph{layer} through $h$ in $G\ast H$, and $^{g}H=\{(g,h)\in V(G\ast H)\mid\ h\in V(H)\}$ an $H$-\emph{layer} through $g$ in $G\ast H$. Note that the subgraph of $G\ast H$ induced by $G^{h}$ is isomorphic to $G$, and the subgraph of $G\ast H$ induced by $^{g}H$ is isomorphic to $H$ for $\ast \in \{\Box ,\boxtimes ,\circ \}$. On the other hand, there are no edges between vertices of $G^{h}$ and between vertices of $^{g}H$ in $G\times H$. Note also that all four products are associative and only the first three are commutative, while the lexicographic product is not, cf.~\cite{ImKl}. The map $p_{G}:V(G\ast H)\rightarrow V(G)$ defined by $p_{G}((g,h))=g$ is called a \emph{projection map onto} $G$ for $\ast \in \{\Box ,\boxtimes ,\times ,\circ \}$. Similarly, we can define the \emph{projection map onto} $H$.


\section{Cartesian product of graphs}

In 1968, Vizing \cite{v} posed the following conjecture, which is still widely open, concerning the domination number of Cartesian product of two graphs $G$ and $H$:
\begin{equation}\label{EQ23}
\gamma(G\square H)\geq \gamma(G)\gamma(H).
\end{equation}
One of the tools which have been used while trying to prove, or making a contribution to the knowledge on the conjecture, is precisely the packing number. This could probably be one of reasons that the packing number has  been very rarely considered by itself in an investigation regarding products of graphs.

One of the most common contributions to the study on Vizing's conjecture concerns finding some Vizing-like results for other domination related parameters in the Cartesian product of graphs, or even in other products. In this sense, similar inequalities concerning many domination parameters have been extensively obtained in the literature (for more information the reader can consult \cite{b}).

We first remark the following Vizing-like inequality for the packing number proved by Kazemi \emph{et al.} \cite{kps}. That is, for all graphs $G$ and $H$,
\begin{equation}\label{EQ21}
\rho(G\square H)\geq \rho(G)\rho(H).
\end{equation}

Although the bound above satisfies the style of a Vizing-like result, in general this bound is not too accurate. That is, the real value for $\rho(G\square H)$ is frequently much larger than $\rho(G)\rho(H)$. For instance, we next show that the difference between $\rho(G\square H)$ and $\rho(G)\rho(H)$ can be arbitrary large. To this end, we first recall that the distance formula in Cartesian product of two graphs $G$ and $H$ is given by
\begin{equation}\label{Dist}
d_{G\square H}((g,h),(g',h'))=d_{G}(g,g')+d_{H}(h,h').\ \ \ ({\rm See}\ \cite{ImKl}.)
\end{equation}

\begin{proposition}
For any integer $b\geq 2$, there exist two graphs $G$ and $H$ for which $\rho(G\square H)-\rho(G)\rho(H)=b$.
\end{proposition}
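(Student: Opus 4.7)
The plan is to exhibit explicit graphs $G$ and $H$ realizing the desired difference. A natural candidate is the twin-star construction $G=H=K_{1,b+1}$: the star has trivial packing number (its center dominates the whole vertex set, so $\rho(K_{1,b+1})=1$ and hence $\rho(G)\rho(H)=1$), so it suffices to prove $\rho(G\square H)=b+1$. Fix notation: let $c$ denote the center of $K_{1,b+1}$ and $l_1,\dots,l_{b+1}$ the leaves.

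For the lower bound $\rho(G\square H)\geq b+1$, I would exhibit the "diagonal" set $P=\{(l_i,l_i):1\leq i\leq b+1\}$ and invoke the distance formula \eqref{Dist}: for $i\neq j$, $d_{G\square H}((l_i,l_i),(l_j,l_j))=d(l_i,l_j)+d(l_i,l_j)=2+2=4\geq 3$, so the closed neighborhoods of vertices of $P$ are pairwise disjoint.

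The main content is the matching upper bound $\rho(G\square H)\leq b+1$. I would partition $V(G\square H)$ into three types: (i) the central vertex $(c,c)$, (ii) the "cross" vertices $\{(c,l_j):j\}\cup\{(l_i,c):i\}$, and (iii) the "leaf-leaf" vertices $\{(l_i,l_j):i,j\}$, and reason by cases on which types a packing $P$ contains. Since $N[(c,c)]$ already meets every cross vertex and every leaf-leaf closed neighborhood, having $(c,c)\in P$ forces $|P|=1$. Two distinct cross vertices are always at distance at most $2$ (via \eqref{Dist}, $d\leq 1+1$ or $0+2$), so $P$ contains at most one cross vertex. For two leaf-leaf vertices $(l_i,l_j)$ and $(l_{i'},l_{j'})$ the distance is $d(l_i,l_{i'})+d(l_j,l_{j'})$, which is at most $2$ whenever $i=i'$ or $j=j'$, so the leaf-leaf members of $P$ must have pairwise distinct first and second coordinates — a "non-attacking rooks" condition giving at most $b+1$ of them.

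The most delicate case is mixing one cross vertex with leaf-leaf vertices; here a quick computation using \eqref{Dist} shows a cross vertex $(c,l_j)$ forbids any leaf-leaf vertex in column $j$, leaving at most $b$ valid leaf-leaf choices (one per remaining column) and a total of $1+b=b+1$. Assembling all cases yields $\rho(G\square H)\leq b+1$, which combined with the lower bound gives $\rho(G\square H)-\rho(G)\rho(H)=(b+1)-1=b$, as required. The only real obstacle is the case analysis, but each sub-case collapses to either the pigeonhole bound on coordinates or a one-step distance computation in $K_{1,b+1}$.
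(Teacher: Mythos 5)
Your proof is correct, but it uses a genuinely different construction from the paper's. You take $G=H=K_{1,b+1}$ and show $\rho(K_{1,b+1}\,\square\, K_{1,b+1})=b+1$ directly: the diagonal of leaf--leaf vertices gives the lower bound via the distance formula~(\ref{Dist}), and your three-way case analysis (center, cross, leaf--leaf) together with the ``non-attacking rooks'' condition on leaf--leaf vertices gives the matching upper bound; since $\rho(K_{1,b+1})=1$, the difference is exactly $b$. I checked the delicate mixed case: a cross vertex $(c,l_j)$ is at distance $1+d(l_j,l_{j'})$ from $(l_i,l_{j'})$, which is $3$ precisely when $j'\ne j$, so it does forbid exactly one column and the count $1+b$ is right. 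The paper instead writes $b=r(t-1)$ and builds $G_{r,t}$ by attaching $t$ pendant vertices to each vertex of an $r$-vertex graph, then shows $\rho(G_{r,t})=r$ and $\rho(G_{r,t}\,\square\, K_n)=rt$ for $n\ge t$. Your approach buys a single, fully explicit and self-contained family with an elementary case analysis; the paper's buys a two-parameter family that separates the two contributions to the gap ($r$ from the base graph, $t-1$ from the pendants) and reuses the complete graph as the second factor, which fits the theme of the subsequent results on $G\,\square\, K_r$. Both arguments ultimately rest on the same tool, the distance formula~(\ref{Dist}).
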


\begin{proof}
Let $b=r(t-1)$, in which $r,t\geq 2$ are two integers. We consider the graph $G_{r,t}$ constructed as follows. We begin with a graph $G_r$ of order $r$. We next add $t$ pendant vertices to each vertex of $G_r$. In what follows we claim that
\begin{equation}\label{EQ100}
\rho(G_{r,t})=r.
\end{equation}

Let $S$ be a packing set of cardinality $\rho(G_{r,t})$. For any vertex $v$ of $G_{r}$, at most one vertex of $N[v]$ can belong to $S$. Thus, $\rho(G_{r,t})\le r$. By taking exactly one of the added pendant vertices to each vertex of $G_r$, we construct a packing set of $G_{r,t}$. This also means that $\rho(G_{r,t})\ge r$ and we have the equality (\ref{EQ100}).

We next give the exact value of the packing number of the Cartesian product of the graph $G_{r,t}$ and a complete graph $K_n$ on $n\geq t$ vertices. In fact, we claim that
\begin{equation}\label{EQ101}
\rho(G_{r,t}\square K_n)=rt.
\end{equation}

Let $V(K_n)=\{h_1,\dots,h_n\}$ and let $V(G_{r,t})=\{g_{1,0},g_{1,1},\dots,g_{1,t},\dots,g_{r,0},g_{r,1},\dots,g_{r,t}\}$, where $V(G_r)=\{g_{1,0},\dots,g_{r,0}\}$ and $N(g_{i,0})\setminus V(G_r)=\{g_{i,1},\dots,g_{i,t}\}$ for every $i\in\{1,\dots,r\}$. To prove our result, we consider the set
$$T=\{(g_{1,1},h_1),\dots,(g_{r,1},h_1),(g_{1,2},h_2),\dots,(g_{r,2},h_2),\dots,(g_{1,t},h_t),\dots,(g_{r,t},h_t)\}.$$

It can be noticed, by (\ref{Dist}), that the distance between any two vertices of $T$ is at least three. Thus, $T$ is a packing set of $G_{r,t}\square K_n$, and so, $\rho(G_{r,t}\square K_n)\geq|T|=rt$.

Let $B$ be a packing set of cardinality $\rho(G_{r,t}\square K_n)$. Consider the set of vertices of $G_{r,t}\square K_n$ in the matrix form $M=\{(g_{i,j},h_{k})\mid 1\leq i\leq r, 0\leq j\leq t\  \mbox{and}\ 1\leq k\leq n\}$. Hence, we can partition the set of rows of $M$ into the sets $M_{1,0},\cdots,M_{r,0}$, in which $M_{i,0}$ contains $t+1$ rows
$$((g_{i,0},h_{1}),\cdots,(g_{i,0},h_{n})),((g_{i,1},h_{1}),\cdots,(g_{i,1},h_{n})),\cdots,((g_{i,t},h_{1}),\cdots,(g_{i,t},h_{n})).$$
Since the subgraph induced by the vertices in $((g_{i,j},h_{1}),\cdots,(g_{i,j},h_{n}))$ for $0\leq j\leq t$, is a copy of the complete graph $K_{n}$, every row in $M_{i,0}$ has at most one vertex in $B$. Therefore, $|M_{i,0}\cap B|\leq t+1$ for each $1\leq i\leq r$. Suppose to the contrary that, $|M_{i,0}\cap B|=t+1$ for some $1\leq i\leq r$. Let $(g_{i,t},h_{j})$ be  the unique vertex of $((g_{i,t},h_{1}),\cdots,(g_{i,t},h_{n}))$ which belongs to $B$. This implies that $|N[(g_{i,0},h_{j})]\cap B|=2$, which is a contradiction. Thus, $|M_{i,0}\cap B|\leq t$ for each $1\leq i\leq r$, and so, $\rho(G_{r,t}\square K_n)=|B|\leq rt$. Therefore, we have the equality (\ref{EQ101}).

Since $\rho(K_n)=1$ and $\rho(G_{r,t})=r$, we have $\rho(G_{r,t}\square K_n)-\rho(G_{r,t})\rho(K_n)=r(t-1)=b$.
\end{proof}

For the open version of packings, we observe that the analogous inequality to that in (\ref{EQ21}) does not hold in general. To see this we consider for instance $G=H=P_{2}$. The definition of the packing number of graphs (based on a maximum value) is more feasible to find lower bounds than to find upper bounds for it. The main part of results known for the Cartesian product is an example of it. In contrast, in what follows we center our attention into giving an upper bound on $\rho(G\square H)$.

\begin{proposition}\label{Pro2}
For any graphs $G$ and $H$,
$$\rho(G\square H)\leq \min\{\rho(G)|V(H)|,\rho(H)|V(G)|\}.$$
This bound is sharp.
\end{proposition}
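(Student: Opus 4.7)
The plan is to exploit the distance formula (\ref{Dist}) to show that each $G$-layer (respectively, each $H$-layer) of a maximum packing of $G \square H$ is itself a packing in the corresponding factor. By the symmetric roles of $G$ and $H$, it suffices to prove the single inequality $\rho(G\square H)\le \rho(G)|V(H)|$.

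First I would fix a maximum packing $P\subseteq V(G\square H)$ and, for each $h\in V(H)$, define $P_h = \{g\in V(G) : (g,h)\in P\}$. The key step is to check that $P_h$ is a packing in $G$. Suppose on the contrary that distinct $g, g'\in P_h$ satisfy $N_G[g]\cap N_G[g']\neq \emptyset$; then $d_G(g,g')\le 2$, and applying (\ref{Dist}) gives
\begin{equation*}
d_{G\square H}((g,h),(g',h)) = d_G(g,g') + d_H(h,h) \le 2,
\end{equation*}
contradicting the fact that $(g,h)$ and $(g',h)$ belong to the packing $P$ (whose vertices must be at mutual distance at least three). Hence $|P_h|\le \rho(G)$ for every $h\in V(H)$.

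Since the layers $\{G^h : h\in V(H)\}$ partition $V(G\square H)$, we obtain
\begin{equation*}
\rho(G\square H) \;=\; |P| \;=\; \sum_{h\in V(H)}|P_h| \;\le\; \rho(G)\,|V(H)|,
\end{equation*}
and the symmetric argument using $H$-layers yields $\rho(G\square H)\le \rho(H)|V(G)|$, giving the claimed minimum.

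For sharpness, a simple family suffices: taking $H=K_1$ gives $G\square H\cong G$ and both sides equal $\rho(G)$; more substantively, one can verify the bound is tight on products like $P_4\square P_4$ or use the construction from the previous proposition (where $\rho(G_{r,t}\square K_n)=rt=\rho(G_{r,t})\cdot t$ matches the bound when $|V(K_n)|=t$). I do not expect any real obstacle here; the only point requiring care is the strict use of the distance formula (\ref{Dist}) to translate a packing violation in a $G$-layer into a packing violation in the product.
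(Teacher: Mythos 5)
Your proof of the inequality is correct and follows essentially the same route as the paper: partition a maximum packing $P$ by $G$-layers, observe that each piece projects to a packing of $G$, sum, and invoke symmetry (the paper simply asserts that $P\cap G^{h_i}$ is a packing in the layer, whereas you verify it explicitly via the distance formula, which is a welcome extra step). One correction on sharpness: $P_4\square P_4$ is \emph{not} a tight example, since the bound gives $\rho(P_4)|V(P_4)|=8$ while $\rho(P_4\square P_4)=4$ (five pairwise-disjoint closed neighborhoods would already require at least $15$ vertices and the four corners are optimal). Your other examples do work --- $H=K_1$ trivially, and $G_{r,t}\square K_t$ from the preceding proposition gives $rt=\rho(G_{r,t})\,|V(K_t)|$ --- and the paper itself uses $\rho(P_2\square K_{m,n})=2$ with $m+n\ge 3$.
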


\begin{proof}
Let $V(H)=\{h_{1},\cdots,h_{|V(H)|}\}$. Clearly, $G\square H$ contains $|V(H)|$ disjoint $G$-layers. Now let $P$ be a maximum packing in $G\square H$. Hence, $P_{i}=P\cap G^{h_i}$ is a packing in $(G\Box H)[G^{h_i}]$, for each $1\leq i\leq|V(H)|$. Therefore, $|P_{i}|\leq \rho(G)$, and it follows that
$$\rho(G\square H)=|P|=\sum_{i=1}^{|V(H)|}|P_{i}|\leq \rho(G)|V(H)|.$$

We have $\rho(G\square H)\leq \rho(H)|V(G)|$, by a similar fashion. This shows the upper bound. That the bound is sharp can be seen by considering $\rho(P_{2}\square K_{m,n})=2$ when $m+n\geq 3$.
\end{proof}

Similarly to the inequality in Proposition \ref{Pro2}, we have
$$\rho_{o}(G\square H)\leq \min\{\rho_{o}(G)|V(H)|,\rho_{o}(H)|V(G)|\}.$$

In what follows we always assume that $|V(G)|,|V(H)|\geq 2$, for otherwise $G\square H\cong G$ or $G\square H\cong H$. If $G_{1},\ldots,G_{r}$ and $H_{1},\ldots,H_{s}$ are the components of $G$ and $H$, respectively, then $\rho_{0}(G\square H)=\sum_{i,j}\rho_{0}(G_{i}\square H_{j})$. So, we can assume that both $G$ and $H$ are connected.

We start with a lower bound on $\rho_{o}(G\square H)$. To this end, we let $\eta_{G}=\lceil(\mathrm{diam}(G)+1)/3\rceil$ and
\begin{equation}
\eta_{G}^{H}=\left \{
\begin{array}{lll}
1, & \mbox{if}\ G\neq K_{2}\ \mbox{and}\ \mathrm{diam}(H)\equiv2\ (\mbox{mod}\ 3), \vspace{1.5mm}\\
0, & \mbox{otherwise.}
\end{array}
\right .
\end{equation}

\begin{theorem}\label{Theo4}
For any graphs $G$ and $H$,
\begin{equation*}
\rho_{o}(G\square H)\geq \max\{\rho(G)\rho_{o}(H),\rho(H)\rho_{o}(G),\eta_{H}\rho_{o}(G)+\eta_{G}^{H},\eta_{G}\rho_{o}(H)+\eta_{H}^{G}\}.
\end{equation*}
\end{theorem}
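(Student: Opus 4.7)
The statement lists four separate lower bounds, so my plan is to exhibit, for each one, an explicit open packing of $G\square H$ of the required cardinality. The whole argument rests on the following characterization, which I would state at the start: two distinct vertices $(g_1,h_1),(g_2,h_2)$ of $G\square H$ share a common neighbor if and only if either (i) $g_1=g_2$ and $h_1,h_2$ share a common neighbor in $H$, or (ii) $h_1=h_2$ and $g_1,g_2$ share a common neighbor in $G$, or (iii) $g_1g_2\in E(G)$ and $h_1h_2\in E(H)$. This follows immediately from the edge rule in $G\square H$.

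For the bound $\rho(G)\rho_o(H)$, let $P$ be a maximum packing of $G$ and $Q$ a maximum open packing of $H$; I claim $P\times Q$ is an open packing of $G\square H$. Case~(i) is excluded because $Q$ is an open packing of $H$, case~(ii) because $P$ is a (closed) packing of $G$, and case~(iii) because distinct vertices of $P$ are at distance at least three in $G$ and are hence non-adjacent. The bound $\rho(H)\rho_o(G)$ follows by swapping the roles of $G$ and $H$.

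For the bound $\eta_H\rho_o(G)+\eta_G^H$, fix a geodesic $h_0,h_1,\ldots,h_d$ of $H$ realizing $d=\mathrm{diam}(H)$, put $k=\lfloor d/3\rfloor$, and let $L=\{h_0,h_3,\ldots,h_{3k}\}$, so $|L|=\eta_H$. For each $h\in L$ pick a maximum open packing $Q_h$ of $G$ and set $S=\bigcup_{h\in L}(Q_h\times\{h\})$. Inside a single $G$-layer $S$ is already an open packing by construction; for vertices in different layers the $H$-distance is at least $3$, so each of the three cases above fails. When $G\neq K_2$ and $d\equiv 2\pmod 3$ (so $d=3k+2$ and $\eta_G^H=1$), I would then adjoin a single vertex $(g_0,h_d)$, choosing $g_0\in V(G)\setminus Q_{h_{3k}}$; such a $g_0$ exists because for a connected graph with at least two vertices the equality $\rho_o(G)=|V(G)|$ forces $G=K_2$. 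The only earlier layer within $H$-distance two of $h_d$ is $G^{h_{3k}}$, and for its vertices case~(i) fails by the choice of $g_0$, case~(ii) fails because $h_d\neq h_{3k}$, and case~(iii) fails because $h_d$ and $h_{3k}$ are non-adjacent (they lie at distance two on the geodesic). Hence $S\cup\{(g_0,h_d)\}$ is an open packing of cardinality $\eta_H\rho_o(G)+1$.

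The final bound $\eta_G\rho_o(H)+\eta_H^G$ follows by exchanging the roles of $G$ and $H$, using commutativity of the Cartesian product. The only genuinely delicate step is the correction term: one must simultaneously argue that a vertex $g_0$ outside the last open packing exists (which is exactly the content of the hypothesis $G\neq K_2$) and that placing it two $H$-steps past the last regular layer creates no common neighbor with any element of $Q_{h_{3k}}$; this is where the characterization in case~(i) is used to pinpoint $G\neq K_2$ as the correct hypothesis rather than something weaker.
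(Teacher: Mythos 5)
Your proposal is correct and takes essentially the same route as the paper: the same witness sets (a maximum packing of one factor times a maximum open packing of the other, and copies of an open packing of $G$ placed in the layers indexed by every third vertex of a diametral path of $H$, plus one correction vertex chosen outside the last open packing when $G\neq K_2$ and $\mathrm{diam}(H)\equiv 2\pmod 3$), relying on the same standing assumption that $G$ and $H$ are connected of order at least two. The only difference is organizational: you package the verification into an explicit common-neighbour characterization for $G\square H$, whereas the paper runs the equivalent case analysis directly by contradiction.
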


\begin{proof}
Let $V(H)=\{v_1,\ldots,v_{|V(H)|}\}$ and let $P'$ be a maximum open packing in $G$ and $P'_{i}=P'\times \{v_i\}$. Consider a diametral path $P=v_{1}v_{2}\cdots v_{\mathrm{diam}(H)+1}$ in $H$, by renaming vertices if necessary. If $G=K_{2}$ or $\mathrm{diam}(H)\equiv0$ or $1$ (mod $3$), then
$$P_{1}=\cup_{i=1}^{\lfloor \mathrm{diam}(H)/3\rfloor+1}P'_{3i-2}$$
is an open packing in $G\square H$. Therefore, $\rho_{o}(G\square H)\geq|P_{1}|=\eta_{H}\rho_{o}(G)$. So, in what follows we suppose that $G\neq K_{2}$ and $\mathrm{diam}(H)\equiv 2$ (mod $3$). Let $u_{j}\in V(G)\setminus P'$ (note that such a vertex exists because $G\neq K_{2}$). Observe that $P_{2}=P_{1}\cup\{(u_{j},v_{\mathrm{diam}(H)+1})\}$ is an open packing in $G\square H$ of cardinality $\eta_{H}\rho_{o}(G)+1$. The above discussion results in $\rho_{o}(G\square H)\geq \eta_{H}\rho_{o}(G)+\eta_{G}^{H}$. Interchanging the roles of $G$ and $H$ we have $\rho_{o}(G\square H)\geq \eta_{G}\rho_{o}(H)+\eta_{H}^{G}$.

Now, let $P_{G}$ be a maximum open packing in $G$ and let $P_{H}$ be a maximum packing in $H$. Let $P=P_{G}\times P_{H}$. Suppose that there exists a vertex $(u,v)\in V(G\square H)$ with two neighbors $(u',v')$ and $(u'',v'')$ in $P$. Since $P_H$ is a packing of $H$, we have either $v'=v''$ or $d_H(v',v'')>2$. Because $(u,v)$ is a common neighbor of $(u',v')$ and $(u'',v'')$, we have $d_H(v',v'')\leq 2$. Therefore, $v'=v''$ follows. By the same reason, because $P_G$ is an open packing of $G$, we have that either $u'=u''$, or $u'u''\in E(G)$, or $d_G(u',u'')>2$. The first option cannot occur because $(u',v')\neq (u'',v'')$ and the last option is not possible since $(u',v')$ and $(u'',v'')$ have a common neighbor $(u,v)$. Therefore, $u'u''\in E(G)$. By the properties of the Cartesian product, the common neighbor $(u,v)$, of $(u',v')$ and $(u'',v'')$, must be also in the $G$-layer $G^{v'}$ because $v'=v''$, which is a contradiction with $P_G$ being an open packing of $G$ as different vertices $u',u''\in P_G$ have a common neighbor $u$. This shows that $P$ is an open packing in $G\square H$. So,
$$\rho_{o}(G\square H)\geq|P|=\rho_{o}(G)\rho(H).$$

Interchanging the roles of $G$ and $H$ we have $\rho_{o}(G\square H)\geq\rho_{o}(H)\rho(G)$, and this completes the proof.
\end{proof}

Since
\begin{equation}\label{EQ1}
\rho_{o}(K_{m}\square K_{n})=\left \{
\begin{array}{lll}
2, & \mbox{if}\ m=2\ \mbox{or}\ n=2, \vspace{1.5mm}\\
1, & \mbox{otherwise},
\end{array}
\right.
\end{equation}
we may assume that $\max\{\mathrm{diam}(G),\mathrm{diam}(H)\}\geq2$, where $\mathrm{diam}(G)$ and $\mathrm{diam}(H)$ stand for the diameters of $G$ and $H$, respectively.

From now on, we fix one factor, say $H$, to be a complete graph and try to bound the open packing number of $G\Box K_r$. In the next results, we follow a similar approach as for efficient open domination Cartesian products from \cite{KPRT}. Clearly, as $G\square K_{1}\cong G$, we only consider $r\ge 2$. For this we need to distinguish the cases when $r=2$ and when $r>2$. The reason for this is that for $r=2$, both vertices of a $K_r$-layer can be in an open packing, while this cannot occur when $r>2$, where at most one vertex from a $K_r$-layer can be in any open packing of $G\Box K_r$.

As an immediate consequence of Theorem \ref{Theo4}, we have $\rho_{o}(G\Box K_2)\geq2\rho(G)$, for any graph $G$. So, we next consider $\rho_{o}(G\Box K_r)$ for $r>2$. In this case, it is easy to see that every $K_r$-layer contains at most one vertex of any open packing of $G\Box K_r$.

Let $G$ be a graph and let $S$ be a $2$-independent set of $G$. We define a graph $G/S$ as follows. The vertex set $V(G/S)$ consists of components of $G[S]$. Two components $C$ and $C'$ of $G[S]$ are adjacent in $G/S$ whenever $d_{G}(C,C')=2$. We recall that $d_{G}(H_{1},H_{2})=\min\{d_{G}(v_{1},v_{2})\mid v_{1}\in V(H_{1})\ \mbox{and}\ v_{2}\in V(H_{2})\}$ for any subgraphs $H_{1}$ and $H_{2}$ of $G$, and also $\chi(G)$ is the chromatic number of $G$.

\begin{theorem}\label{theochi}
Let $G$ be a graph and $r>2$ an integer. Then,
$$\rho_{o}(G)\leq \rho_o(G\Box K_r)\leq \alpha_{2}(G).$$
If $G$ is triangle-free and $\chi(G/S)\leq r$, where $S$ is an $\alpha_{2}(G)$-set, then $\rho_o(G\Box K_r)=\alpha_{2}(G)$.
\end{theorem}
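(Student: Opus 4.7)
The plan has three independent parts.

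For the lower bound $\rho_o(G)\le \rho_o(G\Box K_r)$, I would lift a maximum open packing $Q$ of $G$ into a single $G$-layer: fix any $h\in V(K_r)$ and check that $Q\times\{h\}$ is an open packing of $G\Box K_r$. A candidate common neighbor $(a,b)$ of distinct $(u_1,h),(u_2,h)\in Q\times\{h\}$ would, by the definition of the Cartesian product, either force $a$ to be a common $G$-neighbor of $u_1$ and $u_2$ (excluded because $Q$ is an open packing of $G$), force $u_1=u_2$, or demand $b=h$ and $b\neq h$ simultaneously; so none exists.

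For the upper bound, let $P$ be a maximum open packing of $G\Box K_r$. The key observation is that since $r>2$, each $K_r$-layer meets $P$ in at most one vertex: otherwise $(g,h_1),(g,h_2)\in P$ would share the common neighbor $(g,h_3)$ for any third index $h_3\in V(K_r)$. Hence the projection $p_G$ is injective on $P$, and $S:=p_G(P)$ satisfies $|S|=|P|$. I then claim $S$ is $2$-independent: supposing instead that $u\in S$ has two distinct neighbors $v,w\in S$ with preimages $(u,h_u),(v,h_v),(w,h_w)\in P$, if $h_u\neq h_v$ then $(u,h_v)$ is a common neighbor of $(u,h_u)$ (via the $K_r$-edge) and $(v,h_v)$ (via the $G$-edge); the case $h_u\neq h_w$ is symmetric; and if $h_u=h_v=h_w$ then $(u,h_u)\in P$ itself is a common neighbor of $(v,h_v)$ and $(w,h_w)$. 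Thus $|P|=|S|\le \alpha_2(G)$.

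For the equality, assume $G$ is triangle-free and $\chi(G/S)\le r$ for some $\alpha_2(G)$-set $S$. Since $S$ is $2$-independent, every component of $G[S]$ is an isolated vertex or isolated edge. Fix a proper $r$-coloring $c\colon V(G/S)\to V(K_r)$ of the auxiliary graph $G/S$, and set
\[
P=\{(u,c(C)) : C\text{ is a component of }G[S],\ u\in V(C)\}.
\]
Then $|P|=|S|=\alpha_2(G)$; the main task is verifying $P$ is an open packing. Suppose $(u_1,h_1),(u_2,h_2)\in P$ are distinct and share a neighbor $(x,h)$, and split into sub-cases according to whether each connecting edge is a $K_r$-edge or a $G$-edge. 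When at least one edge is a $K_r$-edge, $u_1$ and $u_2$ lie in the same component $C$ of $G[S]$, hence $h_1=h_2=c(C)$ by construction; this either gives $(u_1,h_1)=(u_2,h_2)$ (both $K_r$-edges, violating distinctness) or collides with $h_1\neq h=h_2$ (mixed case). In the remaining all-$G$-edges case, $h_1=h=h_2$ and $u_1,u_2\in S\cap N_G(x)$ are distinct; triangle-freeness forces $u_1\not\sim u_2$, so their components $C_1,C_2$ of $G[S]$ are distinct and at $G$-distance exactly $2$, whence $c(C_1)\neq c(C_2)$, contradicting $h_1=h_2$.

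The principal obstacle is this last case analysis: triangle-freeness is needed precisely to ensure that two distinct elements of $S$ sharing an outside common neighbor are non-adjacent, placing them in different components of $G[S]$ at $G$-distance exactly $2$, which is the adjacency relation that the coloring of $G/S$ is designed to control.
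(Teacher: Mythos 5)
Your proposal is correct and follows essentially the same route as the paper: the lower bound via lifting an open packing of $G$ into a single $G$-layer (the paper instead invokes its earlier Cartesian-product bound with $\rho(K_r)=1$, which amounts to the same construction), the upper bound via the observation that each $K_r$-layer meets an open packing in at most one vertex so that the projection is a $2$-independent set, and the equality via assigning each component of $G[S]$ the $K_r$-coordinate given by a proper $r$-coloring of $G/S$. The case analyses are organized slightly differently but contain the same contradictions, so there is nothing substantive to add.
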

	
\begin{proof}
The lower bound follows from Theorem \ref{Theo4} because $\rho(K_n)=1$.

Suppose now that $\rho_o(G\Box K_r)>\alpha_{2}(G)$, let $V(K_r)=\{1,\ldots ,r\}$ and let $P$ be an $\rho_o(G\Box K_r)$-set. Clearly, every $K_r$-layer contains at most one vertex from $P$, because $r>2$. So, the projection $S'=p_G(P)$ contains $\rho_o(G\Box K_r)$ vertices of $G$. Notice that $S'$ is not a $2$-independent set because $\rho_o(G\Box K_r)>\alpha_{2}(G)$. Therefore there exists a vertex $x\in S'$ such that its degree is at least two in $G[S']$. Let $u$ and $v$ be neighbors of $x$ in $G[S']$ and let $(x,\ell),(u,i)$ and $(v,j)$ be the vertices of $P$ that project to $x,u$ and $v$, respectively. If $i=j$, then $(x,i)\in N_{G\Box K_r}((v,i))\cap N_{G\Box K_r}((u,i))$, which is a contradiction with $P$ being an open packing of $G\Box K_r$. Thus, we may assume that $i\neq j$. But then we have $\ell\neq i$ or $\ell\neq j$. Suppose without loss of generality that $\ell\neq i$. This implies that $(u,\ell)\in N_{G\Box K_r}((u,i))\cap N_{G\Box K_r}((x,\ell))$, which is a final contradiction. Hence $\rho_o(G\Box K_r)\leq \alpha_{2}(G)$ and the upper bound follows.

Let $G$ be a triangle-free graph, and let $S$ be an $\alpha_{2}(G)$-set for which $\chi(G/S)\leq r$. Also, let $V'_1,\ldots ,V'_k$, $k\leq r$, be the color classes of $G/S$. By $V_i$, $i\in \{1,\ldots,k\}$, we denote the set of vertices from $G$ that belong to $V'_i$. Clearly $\alpha_{2}(G)=|S|=\sum_{i=1}^k|V_i|$. We will show that the set
$$P=\{(v,i):v\in V_i, i\in\{1,\ldots ,k\}\}$$
forms an open packing of $G\Box K_r$ (notice that $P$ is well-defined as $k\leq r$). Clearly, $|P|=\alpha_{2}(G)$. Suppose first that $N_{G\Box K_r}((v,i))\cap N_{G\Box K_r}((u,i))\neq \emptyset$ for $v\neq u$. Therefore, there exists a vertex $x$ which is a common neighbor of $u$ and $v$ in $G$. On the other hand, $uv\notin E(G)$, for otherwise $u$, $v$ and $x$ would be on a triangle. By the definition of a $2$-independent set, it follows $x\notin S$. In consequence, $u$ and $v$ belong to two different components of $G[S]$, say to $C$ and $C'$, respectively. Vertices $C$ and $C'$ are adjacent in $G/S$, which yields a contradiction with a proper coloring of $G/S$ as $u,v\in V_{i}$ and with this $C,C'\in V'_{i}$. Thus, $N_{G\Box K_r}((v,i))\cap N_{G\Box K_r}((u,i))=\emptyset$ for any pair of different vertices $u,v\in V_i$. Suppose next that $N_{G\Box K_r}((v,i))\cap N_{G\Box K_r}((u,j))\neq \emptyset$ for $i\neq j$. Note that also $u\neq v$ by the definition of $P$. By the properties of the Cartesian product, $u$ must be adjacent  to $v$ in $G$. Therefore, $u,v\in S$ and they belong to one component of $G[S]$, a contradiction with $i\neq j$. Moreover, in this case we have $N_{G\Box K_r}((v,i))\cap N_{G\Box K_r}((u,j))= \emptyset$ and $P$ is an open packing of $G\Box K_r$. Therefore, $\rho_o(G\Box K_r)\geq\alpha_{2}(G)$, because $P$ is an open packing of cardinality $\alpha_2(G)$.
\end{proof}

The assumption which states that the graph $G$ is triangle-free is necessary for the equality $\rho_o(G\Box K_r)=\alpha_{2}(G)$ in Theorem \ref{theochi}. For instance, $\rho_o(K_{t}\Box K_r)=1$ while $\alpha_{2}(K_{t})=2$ for each integer $t\geq 3$.

A direct consequence of the last theorem holds for $r_0=\chi(G/S)$ and a set $S$ being an $\alpha_2(G)$-set of graph $G$.

\begin{corollary}\label{coreq}
Let $G$ be a triangle-free graph. There exists a positive integer $r_0$ such that $\rho_o(G\Box K_r)=\alpha_2(G)$ for every integer $r\geq r_0$.
\end{corollary}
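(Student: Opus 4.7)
The plan is to apply Theorem \ref{theochi} directly, essentially reading off the value of $r_0$ from the hypotheses the theorem requires. First, I would fix any $\alpha_2(G)$-set $S$ (such a set exists because $G$ is a finite graph and $\emptyset$ is a $2$-independent set, so $\alpha_2(G)$ is well-defined). Associated with $S$ is the graph $G/S$ constructed as in the paragraph preceding Theorem \ref{theochi}, and its chromatic number $\chi(G/S)$ is a finite positive integer.

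Next I would set $r_0=\max\{3,\chi(G/S)\}$. The reason for taking the maximum with $3$ is merely technical: Theorem \ref{theochi} is formulated for $r>2$, so I want to make sure every $r\geq r_0$ satisfies $r>2$. (If one accepts the hint in the sentence following the corollary literally, one can simply take $r_0=\chi(G/S)$, provided $\chi(G/S)\geq 3$; in the remaining degenerate case the same argument still applies after enlarging $r_0$ as above.)

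Then, for any integer $r\geq r_0$, I have both $r>2$ and $\chi(G/S)\leq r$, so the hypotheses of the second part of Theorem \ref{theochi} are met with the very same $\alpha_2(G)$-set $S$. The theorem yields $\rho_o(G\Box K_r)=\alpha_2(G)$, which is exactly the desired equality. Since this holds for every $r\geq r_0$, the corollary follows.

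There is essentially no obstacle: the construction $G/S$ is entirely determined by the chosen $\alpha_2(G)$-set, and $\chi(G/S)$ is a concrete finite quantity that plays the role of the threshold $r_0$. The only subtlety worth flagging in the write-up is the parity-of-size-like issue that Theorem \ref{theochi} is stated for $r>2$ whereas the corollary's lower bound on $r$ could in principle be as small as $\chi(G/S)=1$ or $2$; taking $r_0=\max\{3,\chi(G/S)\}$ dispatches this cleanly without altering the meaning of the statement.
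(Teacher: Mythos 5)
Your proposal is correct and matches the paper's own justification, which simply takes $r_0=\chi(G/S)$ for an $\alpha_2(G)$-set $S$ and invokes Theorem \ref{theochi}. Your extra care in setting $r_0=\max\{3,\chi(G/S)\}$ to guarantee $r>2$ is a reasonable (and arguably needed) technical refinement, but the argument is essentially identical.
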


With respect to the inequality $\rho_{o}(G\Box K_2)\geq 2\rho(G)$ (previously remarked) and Theorem \ref{theochi}, the question that remains concerns finding the value $\rho_o(G\Box K_r)$ whether $\chi(G/S)>r>2$ for an $\alpha_2(G)$-set $S$. One can show, by the same steps as in the proof of Theorem \ref{theochi}, that any proper, but partial, $r$-coloring of $G/S$ yields a packing of $G\Box K_r$. Similarly, every packing of $G\Box K_r$ yields a set of $G$ that is $2$-independent. However, it seems to be challenging to find a $2$-independent set that gives the maximum number of vertices in a partial $r$-coloring.


\section{Lexicographic, strong and direct products of graphs}

\ \ \ This section is divided into three parts. We first completely describe the packing and open packing numbers of the lexicographic product. We continue with bounds on the packing and open packing numbers of the strong product. In the last part we present lower bounds for the packing and open packing numbers of the direct product of graphs.

\subsection{Lexicographic product}

\ \ \ For two graphs $G$ and $H$, where $G$ is a connected graph of order at least two, it is a part of folklore that the distance between any two vertices $(g,h),(g',h')\in V(G\circ H)$ is given by
\begin{equation}\label{dist-lexico}
  d_{G\circ H}((g,h),(g',h'))=\left\{\begin{array}{ll}
                                       \min\{2,d_H(h,h')\}, & \mbox{if $g=g'$}, \\
                                       d_G(g,g'), & \mbox{if $g\ne g'$}.
                                     \end{array}
  \right.
\end{equation}
This formula does not hold anymore in the case of a disconnected graph $G$. In such a case the formula holds, if both $g$ and $g'$ are in the same component of $G$, when this component contains at least two vertices. If there exists a singleton $g$ in $G$, then we have $d_{G\circ H}((g,h),(g,h'))=d_H(h,h')$. This is the reason for the following notation. By $G^-$ we denote the graph obtained from $G$ by removing all the isolated vertices of $G$.

\begin{theorem}\label{pack-lexico}
Let $G$ and $H$ be any graphs. If $G$ has $i_G$ isolated vertices, then
$$\rho(G\circ H)=\rho(G)+i_G\rho(H)-i_G.$$
\end{theorem}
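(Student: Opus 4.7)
The plan is to separate the analysis into the part of the lexicographic product involving non-isolated vertices of $G$, and the part arising from isolated vertices of $G$. Writing $G^-$ for $G$ with its $i_G$ isolated vertices removed, the first observation I would make is that $G\circ H$ decomposes as the disjoint union of $G^-\circ H$ and $i_G$ pairwise disjoint copies of $H$. Indeed, if $g$ is an isolated vertex of $G$, then no edge of $G\circ H$ has one endpoint in $^{g}H$ and the other outside, so $^{g}H\cong H$ is an entire connected component of $G\circ H$. Since the packing number is additive over connected components (packings in different components are independent of each other), this yields
\begin{equation*}
\rho(G\circ H)=\rho(G^-\circ H)+i_G\,\rho(H).
\end{equation*}

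Next I would reduce everything to showing $\rho(G^-\circ H)=\rho(G^-)$. Combined with the elementary fact that $\rho(G)=\rho(G^-)+i_G$ (each isolated vertex $v$ has $N[v]=\{v\}$ and can be appended freely to any packing of $G^-$), this gives exactly the claimed formula $\rho(G\circ H)=\rho(G)-i_G+i_G\rho(H)$.

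For the equality $\rho(G^-\circ H)=\rho(G^-)$, the distance formula (\ref{dist-lexico}) (which applies on each connected component of $G^-$, and those components have at least two vertices since isolated vertices were removed) is the key tool. For the upper bound, let $P$ be a maximum packing of $G^-\circ H$; any two vertices of $P$ must be at distance at least $3$. Because $d_{G^-\circ H}((g,h),(g,h'))\le 2$ whenever $g=g'$, each $H$-layer contains at most one vertex of $P$, so the projection $p_{G^-}(P)$ has the same cardinality as $P$. Moreover, for two vertices $(g,h),(g',h')\in P$ with $g\ne g'$ (necessarily in the same component of $G^-$, since otherwise they lie in different components of $G^-\circ H$ and the inequality is trivial), the distance formula forces $d_{G^-}(g,g')\ge 3$, so $p_{G^-}(P)$ is a packing of $G^-$. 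Hence $|P|\le \rho(G^-)$. For the lower bound, fix any $h_0\in V(H)$ and any maximum packing $P'$ of $G^-$; then $\{(g,h_0):g\in P'\}$ is a packing of $G^-\circ H$ by the same distance formula, giving $\rho(G^-\circ H)\ge \rho(G^-)$.

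The only mildly delicate step is making sure the distance formula (\ref{dist-lexico}) is invoked correctly in the disconnected case: it applies verbatim inside each component of $G^-$, and two vertices lying over different components project to different components of $G^-\circ H$, which is handled automatically by the component-wise additivity already used in the first step. Apart from this bookkeeping, the argument is routine, and I do not expect a substantive obstacle.
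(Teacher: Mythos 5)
Your proof is correct and follows essentially the same route as the paper's: both rely on the observation that the layers over isolated vertices of $G$ are components isomorphic to $H$, and on the distance formula (\ref{dist-lexico}) to show that a packing meets each $H$-layer over $G^-$ at most once and projects to a packing of $G^-$. Your explicit decomposition into components plus additivity of $\rho$ is just a cleaner packaging of the paper's single combined construction and counting argument.
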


\begin{proof}
Let $P_G$ be a $\rho(G^-)$-set, let $P_H$ be a $\rho(H)$-set, and let $I$ be the set of all singletons of $G$. For a vertex $v\in V(H)$, we set $P=(P_G\times\{v\})\cup (I\times P_H)$. According to (\ref{dist-lexico}), and the paragraph after it, we can deduce that for any two vertices $(g,h),(g',h')\in P$, it follows that $d_{G\circ H}((g,h),(g',h'))\ge 3$. Thus, $P$ is a packing of $G\circ H$ and we have $\rho(G\circ H)\ge \rho(G)+i_G\rho(H)-i_G$.

On the other hand, let $P$ be a $\rho(G\circ H)$-set. Suppose first that $g\in V(G)$ is a singleton. The subgraph $(G\circ H)[^gH]$ is isomorphic to $H$ and contains at most $\rho(H)$ vertices of $P$. Hence, in $(G\circ H)[I\times V(H)]$, there exist at most $i_G\rho(H)$ vertices of $P$. Since the distance between any distinct vertices $(g,h),(g,h')\in V(G^-\circ H)$ is at most two, it clearly happens that for any $g\in V(G^-)$, $|P\cap (\{g\}\times V(H))|\le 1$. Moreover, for any two vertices $(g,h),(g',h')\in P$, where $g,g'\in V(G^-)$ (notice that it must be $g\ne g'$), it follows $d_G(g,g')=d_{G\circ H}((g,h),(g',h'))\ge 3$. Thus, the projection of $P-(I\times V(H))$ onto $G$ is a packing set in $G^-$. As a consequence, $\rho(G\circ H)=|P|\le |p_G(P-(I\times V(H)))|+i_G\rho(H)\le \rho(G)+i_G\rho(H)-i_G$, which leads to the equality.
\end{proof}

With respect to the open packing number, we can see that the lexicographic product behaves slightly different from that of the packing number.

\begin{theorem}
Let $G$ and $H$ be any graphs. If $G$ has $i_G$ isolated vertices, then
$$\rho_o(G\circ H)=\left\{\begin{array}{ll}
                            \rho_o(G)+i_G\rho_o(H)-i_G, & \mbox{if $H$ has an isolated vertex,} \\[0.2cm]
                            \rho(G)+i_G\rho_o(H)-i_G, & \mbox{otherwise.}
                          \end{array}
\right.$$
\end{theorem}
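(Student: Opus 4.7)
My plan is to follow the template of the proof of Theorem \ref{pack-lexico}, distinguishing cases according to whether $H$ has an isolated vertex. Write $I$ for the set of isolated vertices of $G$, so $|I|=i_G$. Two structural observations are: every $H$-layer $^gH$ with $g\in I$ is a connected component of $G\circ H$ isomorphic to $H$; and for every $g\in V(G^-)$ any two distinct vertices $(g,h),(g,h')$ of $\{g\}\times V(H)$ share a neighbor $(g_1,h'')$ for any $g_1\in N_G(g)$, so $|P\cap(\{g\}\times V(H))|\le 1$ for every open packing $P$ of $G\circ H$. I will also use the identities $\rho_o(G)=\rho_o(G^-)+i_G$ and $\rho(G)=\rho(G^-)+i_G$, both of which hold because isolated vertices of $G$ may be freely included in or excluded from any (open) packing.

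For the lower bound, if $H$ has an isolated vertex $h_0$ take $P=(P_{G^-}\times\{h_0\})\cup(I\times P_H)$, where $P_{G^-}$ is a $\rho_o(G^-)$-set of $G^-$ and $P_H$ is a $\rho_o(H)$-set of $H$; otherwise take $P=(Q_{G^-}\times\{v\})\cup(I\times P_H)$ for any $v\in V(H)$ and any $\rho(G^-)$-set $Q_{G^-}$ of $G^-$. In both instances $|P|$ equals the claimed value. Checking that $P$ is an open packing reduces to verifying that no two distinct vertices in $P$ share a common neighbor, which follows from (\ref{dist-lexico}) and the paragraph after it, from the component observation for $g\in I$, and, in the first case, from $N_H(h_0)=\emptyset$ (ensuring no pair in $P_{G^-}\times\{h_0\}$ gains an intra-layer common neighbor in the product).

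For the upper bound, let $P$ be a $\rho_o(G\circ H)$-set. The vertices of $P$ contained in $I\times V(H)$ split across the isolated copies of $H$ and together contribute at most $i_G\rho_o(H)$ vertices. By the at-most-one-per-layer observation we may write the remaining vertices as $\{(g,h_g):g\in S\}$ for some $S\subseteq V(G^-)$. The key computation is to partition $N((g,h_g))\cap N((g',h_{g'}))$ according to whether the first coordinate equals $g$, $g'$ or some $g''\notin\{g,g'\}$, and to deduce that for distinct $g,g'\in S$ this intersection is empty if and only if (a) $g$ and $g'$ have no common neighbor in $G$, and (b) either $gg'\notin E(G)$ or both $h_g$ and $h_{g'}$ are isolated in $H$. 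Condition (a) by itself already says that $S$ is an open packing of $G^-$, so $|S|\le\rho_o(G^-)=\rho_o(G)-i_G$, which matches the lower bound in the first case. In the second case, since $H$ has no isolated vertex, (b) reduces to $gg'\notin E(G)$, and together with (a) we obtain $d_G(g,g')\ge 3$; hence $S$ is a packing of $G^-$ and $|S|\le\rho(G^-)=\rho(G)-i_G$. Combining with the contribution from $I\times V(H)$ yields the required upper bound.

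The delicate step is the common-neighbor characterization above, because that is exactly where the two cases diverge: an isolated vertex of $H$ weakens the constraint on $S$ from being a packing of $G^-$ to merely being an open packing of $G^-$, which is precisely what produces the switch between $\rho(G)$ and $\rho_o(G)$ in the two branches of the formula.
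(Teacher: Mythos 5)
Your proof is correct and follows essentially the same route as the paper: the same lower-bound constructions, the same decomposition into the isolated copies of $H$ plus $G^-\circ H$, the one-vertex-per-layer observation for $g\in V(G^-)$, and the key point that an edge $gg'$ of $G$ is tolerable only when the corresponding $H$-coordinates are isolated, which is exactly what switches the projection of $P$ from a packing to an open packing of $G^-$. The only difference is organizational: you package both cases into a single characterization of when $N((g,h_g))\cap N((g',h_{g'}))=\emptyset$, whereas the paper handles the second case by showing every open packing of $G^-\circ H$ is a packing and then invoking Theorem \ref{pack-lexico}.
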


\begin{proof}
Suppose first that $h$ is an isolated vertex of $H$. Let $P_G$ be a $\rho_o(G^-)$-set, let $P_H$ be a $\rho_o(H)$-set, and let $I$ be the set of all isolated vertices of $G$. We set $P=(P_G\times\{h\})\cup (I\times P_H)$. If the open neighborhoods centered at distinct vertices $(g,h),(g',h)\in P_G\times \{h\}$ intersect, then we have a contradiction with $P_G$ being a $\rho_o(G^-)$-set. Similarly, if the open neighborhoods centered at distinct vertices $(g,h),(g,h')\in \{g\}\times P_H$ intersect for some $g\in I$, then we have a contradiction with $P_H$ being a $\rho_o(H)$-set. Now, assume $(g'',h'')$ belongs to the intersection of the open neighborhoods of $(g,h)\in P_G\times\{h\}$ and $(g',h')\in I\times P_H$. We note that $g''g\in E(G)$ and $g''=g'\in I$. This contradicts the fact that $g'$ is an isolated vertex in $G$. Thus, $P$ is an open packing of $G\circ H$ and we have $\rho_{o}(G\circ H)\ge \rho_o(G)+i_G\rho_o(H)-i_G$.

Conversely, let $P$ be a $\rho_o(G\circ H)$-set. Suppose first that $g\in V(G)$ is an isolated vertex. The subgraph $(G\circ H)[^gH]$ is isomorphic to $H$ and is a component of $H$. Therefore, $(G\circ H)[^gH]$ contains at most $\rho_o(H)$ vertices of $P$. Hence, in $(G\circ H)[I\times V(H)]$, there exist at most $i_G\rho_o(H)$ vertices of $P$. The distance between any two distinct vertices $(g,h),(g,h')\in V(G^-\circ H)$ is at most two and, if it is one, then they are on a common triangle. Hence, for any $g\in V(G^-)$, it follows $|P\cap (\{g\}\times V(H))|\le 1$. Therefore, we have $|p_G(P')|=|P'|$ for $P^-=P\cap (V(G^-)\times V(H))$. If $p_G(P')$ is not an open packing of $G^-$, then also $P'$ is not an open packing in $G^-\circ H$, which is a contradiction. Therefore, we have $\rho_o(G^-\circ H)\leq \rho_o(G^-)$. Joining both parts we obtain $\rho_o(G\circ H)\le \rho_o(G^-)+i_G\rho_o(H)=\rho_o(G)+i_G\rho_o(H)-i_G$ and the first equality holds.

Suppose now that $H$ has no isolated vertices. Let $P$ be an open packing in $G^-\circ H$ which is not a packing. Therefore, there exist two adjacent vertices $(g',h')$ and $(g'',h'')$ in $P$. Let $g'g''\in E(G)$. Since $h'$ is not an isolated vertex, there is a vertex $h'''\in V(H)$ adjacent to it. Hence $(g',h''')\in N_{G^-\circ H}((g',h'))\cap N_{G^-\circ H}((g'',h''))$, which is a contradiction. So, $g'=g''$ and $h'h''\in E(H)$. In addition, $(g''',h')\in N_{G^-\circ H}((g',h'))\cap N_{G^-\circ H}((g'',h''))$, in which $g'''$ is a neighbor of $g'$ in $G^-$, contradicting the fact that $P$ is an open packing. This shows that every open packing in $G^-\circ H$ is a packing, as well. So, $\rho_{o}(G^-\circ H)=\rho(G^-\circ H)$. Thus, $\rho_{o}(G^-\circ H)=\rho(G^-)$ follows from Theorem \ref{pack-lexico}, because $i_{G^-}=0$. It is also clear that $F=(G\circ H)[I\times V(H)]\cong i_GH$, or equivalently, $i_G$ copies of $H$. Clearly $\rho_o(F)=i_G\rho_o(H)$ and the result for $\rho_o(G\circ H)$ follows.
\end{proof}


\subsection{Strong product}

\ \ \ \ The strong product is a natural environment for the closed neighborhoods
\begin{equation}\label{neigh-strong}
  N_{G\boxtimes H}[(g,h)]=N_G[g]\times N_H[h].
\end{equation}

The distance between any two vertices $(g,h),(g',h')\in V(G\boxtimes H)$ is given by
\begin{equation}\label{dist-strong}
  d_{G\boxtimes H}((g,h),(g',h'))=\max\{d_G(g,g'),d_H(h,h')\}.
\end{equation}

The next result involves the fractional domination number $\gamma_f(G)$ of a graph $G$. Since this is not influential in this work, for the interested readers, we recommend \cite[p.360]{ImKl} for the definition and terminology on this issue.

\begin{theorem}\emph{(\cite[Theorem 28.16]{ImKl})}\label{strong}
If $G$ and $H$ are any graphs, then
$$\rho(G)\rho(H)\leq \rho(G\boxtimes H)\leq \min\{\rho(G)\gamma_f(H),\rho(H)\gamma_f(G)\}.$$
\end{theorem}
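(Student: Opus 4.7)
For the lower bound, my plan is to exploit the closed neighborhood formula $N_{G\boxtimes H}[(g,h)]=N_G[g]\times N_H[h]$ already recorded as equation \eqref{neigh-strong}. I would take a $\rho(G)$-packing $P_G$ in $G$ and a $\rho(H)$-packing $P_H$ in $H$, and check that $P_G\times P_H$ is a packing of $G\boxtimes H$. For two distinct vertices $(g,h),(g',h')\in P_G\times P_H$, the intersection of their closed neighborhoods factorizes as $(N_G[g]\cap N_G[g'])\times (N_H[h]\cap N_H[h'])$; since at least one coordinate differs and the corresponding factor is an intersection of disjoint closed neighborhoods in $G$ or in $H$, the product is empty. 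This immediately yields $\rho(G\boxtimes H)\ge \rho(G)\rho(H)$.

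The upper bound is the real content, and for it I plan a weighting argument using the fractional dominating function. It suffices to prove $\rho(G\boxtimes H)\le \rho(H)\gamma_f(G)$, since the other inequality follows by interchanging the roles of $G$ and $H$ (the strong product is commutative). Let $P$ be a $\rho(G\boxtimes H)$-set and let $f\colon V(G)\to [0,1]$ be a minimum fractional dominating function, so $\sum_{g'\in N_G[g]}f(g')\ge 1$ for every $g\in V(G)$ and $\sum_{g'\in V(G)}f(g')=\gamma_f(G)$. Then I would write
\begin{equation*}
|P|=\sum_{(g,h)\in P}1\;\le\;\sum_{(g,h)\in P}\sum_{g'\in N_G[g]}f(g')\;=\;\sum_{g'\in V(G)}f(g')\,\bigl|P\cap (N_G[g']\times V(H))\bigr|,
\end{equation*}
after swapping the order of summation.

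The key combinatorial step, which I expect to be the main obstacle, is the claim that for every $g'\in V(G)$ the projection of $P\cap (N_G[g']\times V(H))$ onto $H$ is an injection and its image is a packing in $H$, so that $|P\cap (N_G[g']\times V(H))|\le \rho(H)$. To see this, take two distinct elements $(g_1,h_1),(g_2,h_2)\in P$ with $g_1,g_2\in N_G[g']$. Using the distance formula \eqref{dist-strong} and the fact that any two distinct vertices of a packing in $G\boxtimes H$ lie at distance at least three, I get $\max\{d_G(g_1,g_2),d_H(h_1,h_2)\}\ge 3$; but $d_G(g_1,g_2)\le 2$ since both $g_1,g_2$ lie in $N_G[g']$, and hence $d_H(h_1,h_2)\ge 3$. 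This simultaneously forces $h_1\ne h_2$ (injectivity) and shows that the projected set is a packing in $H$.

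Plugging this bound into the displayed inequality gives $|P|\le \rho(H)\sum_{g'\in V(G)}f(g')=\rho(H)\gamma_f(G)$, which is the desired upper bound, and by symmetry also $|P|\le \rho(G)\gamma_f(H)$. Combining the two cases yields the theorem. The whole argument is rather short once the right packing-in-a-ball observation is isolated, and I expect that nearly all the work in the writeup will be the careful verification of this local packing property inside $N_G[g']\times V(H)$.
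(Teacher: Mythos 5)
Your proof is correct. Be aware, though, that the paper contains no proof of this statement: it is quoted from \cite[Theorem 28.16]{ImKl}, and the surrounding text only remarks that the lower bound ``follows easily from (\ref{neigh-strong})'' --- which is exactly your product-of-packings observation, with the same factorization of the intersection of closed neighborhoods --- and that the upper bound ``is achieved by using some linear programming methods.'' Your upper-bound argument is a self-contained, elementary instance of those methods: you pair the packing $P$ against a minimum fractional dominating function $f$ of $G$, use $\sum_{g'\in N_G[g]}f(g')\ge 1$ to dominate the count, swap the order of summation, and then bound each set $P\cap(N_G[g']\times V(H))$ by $\rho(H)$. That local claim is the right one and your verification via (\ref{dist-strong}) works: two packing vertices whose $G$-coordinates both lie in $N_G[g']$ have $d_G(g_1,g_2)\le 2$, so the packing condition forces $d_H(h_1,h_2)\ge 3$, giving both injectivity of the projection to $H$ and the packing property of its image. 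The only cosmetic caveat is that for disconnected factors $d_H(h_1,h_2)\ge 3$ should be read as including infinite distance (different components of $H$), which still yields disjoint closed neighborhoods, so the chain of inequalities is unaffected. In short, your write-up supplies an explicit, LP-duality-free proof of a bound the authors use only as a black box (the lower bound in Proposition \ref{strong1} and the upper bound in the strong-product open packing theorem), and it matches the paper's one-line justification of the lower bound exactly.
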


The lower bound from the last theorem follows easily from (\ref{neigh-strong}) because $P_G\times P_H$ is a packing of $G\boxtimes H$ for any packings $P_G$ and $P_H$ of $G$ and $H$, respectively. The upper bound is achieved by using some linear programming methods. However, we strongly believe that the lower bound gives always the exact result, but the proof of it seems to be very illusive. To underline such ideas we show the equality when one factor has a small diameter.

\begin{proposition}\label{strong1}
Let $G$ and $H$ be graphs. If ${\rm diam}(H)\leq 2$, then
$$\rho(G\boxtimes H)=\rho(G)\rho(H)=\rho(G).$$
\end{proposition}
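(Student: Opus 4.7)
The plan is to show first that the hypothesis $\mathrm{diam}(H)\leq 2$ forces $\rho(H)=1$, so the second equality $\rho(G)\rho(H)=\rho(G)$ is automatic: any two distinct vertices of $H$ lie at distance at most $2$, hence their closed neighborhoods intersect, and a packing of $H$ can contain at most one vertex.

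For the remaining equality $\rho(G\boxtimes H)=\rho(G)$, the lower bound is immediate from Theorem \ref{strong}, namely $\rho(G\boxtimes H)\geq \rho(G)\rho(H)=\rho(G)$. For the upper bound, I would argue by projection onto $G$. Let $P$ be a maximum packing in $G\boxtimes H$ and consider the projection $p_G(P)$. For any two distinct vertices $(g,h),(g',h')\in P$, the definition of a packing forces pairwise empty closed neighborhoods, which by the strong-product distance formula \eqref{dist-strong} translates to
$$\max\{d_G(g,g'),d_H(h,h')\}=d_{G\boxtimes H}((g,h),(g',h'))\geq 3.$$
Since $d_H(h,h')\leq \mathrm{diam}(H)\leq 2$, the maximum must be attained on the first coordinate, giving $d_G(g,g')\geq 3$. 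In particular $g\neq g'$, so $p_G$ restricted to $P$ is injective, and the image $p_G(P)$ is a set of vertices of $G$ with pairwise distances at least $3$, i.e., a packing of $G$. Therefore $|P|=|p_G(P)|\leq \rho(G)$, and combined with the lower bound this yields $\rho(G\boxtimes H)=\rho(G)$.

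There is essentially no obstacle here; the key observation is simply that the hypothesis $\mathrm{diam}(H)\leq 2$ forces the ``max'' in the strong-product distance formula to be realized in the $G$-coordinate, so a packing cannot have two vertices in the same $G$-layer nor in $G$-layers whose first coordinates are too close in $G$. This is exactly the content of Theorem \ref{strong}'s lower bound being tight in this regime.
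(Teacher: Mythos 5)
Your proof is correct and follows essentially the same route as the paper's: both establish $\rho(H)=1$ from ${\rm diam}(H)\leq 2$, take the lower bound from Theorem \ref{strong}, and for the upper bound use the distance formula \eqref{dist-strong} to show that the projection $p_G$ is injective on a maximum packing $P$ and that $p_G(P)$ is a packing of $G$. The only cosmetic difference is that the paper phrases the argument layer by layer while you argue directly on pairs of vertices of $P$; the content is identical.
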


\begin{proof}
Since ${\rm diam}(H)\leq 2$, we have $\rho(H)=1$ and the second equality is clear. By Theorem \ref{strong}, we know that $\rho(G\boxtimes H)\geq \rho(G)\rho(H)$. For the converse, let $P$ be a maximum packing of $G\boxtimes H$ and let $(g,h)$ be an arbitrary vertex from $P$. By (\ref{dist-strong}) we have $^gH\cap P=\{(g,h)\}$, because ${\rm diam}(H)\leq 2$. Similarly, we have $^{g'}H\cap P=\emptyset$ for any vertex $g'$ with $1\leq d_G(g,g')\leq 2$, again by (\ref{dist-strong}), and because ${\rm diam}(H)\leq 2$. Thus, the distance between any two vertices of $p_G(P)$ is at least three, and so $p_G(P)$ is a packing of $G$. Moreover, $p_G(P)$ has the same cardinality as $P$. Therefore, $\rho(G\boxtimes H)\leq\rho(G)=\rho(G)\rho(H)$ and the proof is completed.
\end{proof}

For the case of open packing sets the strong product behaves similarly as in the case of packings.

\begin{theorem}
For any graphs $G$ and $H$ with $i_G$ and $i_H$ isolated vertices, respectively,
$$\rho_o(G\boxtimes H)\geq\rho(G^-)\rho(H^-)+i_G\rho_o(H)+i_H\rho_o(G)-i_Gi_H$$
and
$$\rho_o(G\boxtimes H)\leq \min\{\rho(G^-)\gamma_f(H^-),\rho(H^-)\gamma_f(G^-)\}+i_G\rho_o(H)+i_H\rho_o(G)-i_Gi_H.$$
\end{theorem}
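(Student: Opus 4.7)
The plan is to decompose $G\boxtimes H$ according to the isolated vertices of its factors, reducing the problem to strong products of graphs with no isolated vertices. Let $I_G=V(G)\setminus V(G^-)$ and $I_H=V(H)\setminus V(H^-)$. For each $g\in I_G$, no edge of $G\boxtimes H$ leaves the layer $^gH$, and the induced subgraph on $^gH$ is isomorphic to $H$; an analogous statement holds on the $H$-side. Consequently, $G\boxtimes H$ decomposes as the disjoint union of (i) the strong product $G^-\boxtimes H^-$ on $V(G^-)\times V(H^-)$, (ii) $i_G$ copies of $H^-$ arising from $I_G\times V(H^-)$, (iii) $i_H$ copies of $G^-$ arising from $V(G^-)\times I_H$, and (iv) $i_Gi_H$ isolated vertices from $I_G\times I_H$. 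Since $\rho_o$ is additive over connected components, applying $\rho_o(G^-)=\rho_o(G)-i_G$ and $\rho_o(H^-)=\rho_o(H)-i_H$ yields
$$\rho_o(G\boxtimes H)=\rho_o(G^-\boxtimes H^-)+i_G\rho_o(H)+i_H\rho_o(G)-i_Gi_H.$$
Thus both inequalities of the theorem reduce to the single chain
$$\rho(G^-)\rho(H^-)\le \rho_o(G^-\boxtimes H^-)\le \min\{\rho(G^-)\gamma_f(H^-),\,\rho(H^-)\gamma_f(G^-)\}.$$

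The lower bound follows at once from the universal inequality $\rho\le \rho_o$ combined with the lower bound of Theorem \ref{strong} applied to $G^-$ and $H^-$. The substantive part is the upper bound, and it rests on the key claim that \emph{if two graphs $X$ and $Y$ have no isolated vertices, then every open packing of $X\boxtimes Y$ is already a packing}. Granting the claim, we have $\rho_o(G^-\boxtimes H^-)=\rho(G^-\boxtimes H^-)$, and the upper bound of Theorem \ref{strong} completes the argument.

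To prove the claim, I would take an open packing $P$ of $X\boxtimes Y$ and show any two distinct vertices $(x,y),(x',y')\in P$ lie at distance at least $3$ in $X\boxtimes Y$. If they were at distance exactly $2$, then by definition they would share a neighbor, contradicting the open packing condition. If they were adjacent, a common neighbor can still be exhibited using the hypothesis that neither factor has isolated vertices: when $x=x'$ and $yy'\in E(Y)$, any neighbor $x''\in N_X(x)$ gives the common neighbor $(x'',y)$; the case $y=y'$ with $xx'\in E(X)$ is symmetric; and when $xx'\in E(X)$ and $yy'\in E(Y)$ with both coordinates distinct, the vertex $(x,y')$ is a common neighbor of $(x,y)$ and $(x',y')$. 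The main obstacle is exactly this case analysis---realising that the absence of isolated vertices is precisely what is needed to force a common neighbor for any two vertices of $P$ within distance $2$, thereby collapsing the open packing and packing numbers of the strong product.
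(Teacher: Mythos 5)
Your proposal is correct and follows essentially the same route as the paper: decompose $G\boxtimes H$ along the isolated vertices of the factors into $G^-\boxtimes H^-$, copies of $H^-$ and $G^-$, and singletons, and then observe that since every edge of $G^-\boxtimes H^-$ lies in a triangle (as neither factor has isolated vertices), open packings of $G^-\boxtimes H^-$ coincide with packings, so Theorem \ref{strong} applies. Your packaging of the reduction as the exact identity $\rho_o(G\boxtimes H)=\rho_o(G^-\boxtimes H^-)+i_G\rho_o(H)+i_H\rho_o(G)-i_Gi_H$ via additivity over components is a slightly cleaner presentation of the same argument, and your explicit exhibition of common neighbors correctly justifies the triangle claim that the paper only states.
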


\begin{proof}
Let $P_G$ and $P_H$ denote a $\rho(G)$-set and a $\rho(H)$-set, respectively. Let $P^o_G$ and $P^o_H$ denote a $\rho_o(G)$-set and a $\rho_o(H)$-set, respectively. Finally, let $I_G$ and $I_H$ be the sets of isolated vertices from $G$ and $H$, respectively. We set
$$S=((P_G-I_G)\times (P_H-I_H))\cup (I_G\times (P^o_H-I_H)) \cup ((P^o_G-I_G)\times I_H)\cup(I_G\times I_H).$$
The four sets in this union are clearly disjoint and we have $|S|=\rho(G^-)\rho(H^-)+i_G(\rho_o(H)-i_H)+i_H(\rho_o(G)-i_G)+i_Gi_H=\rho(G^-)\rho(H^-)+i_G\rho_o(H)+i_H\rho_o(G)-i_Gi_H$. The set $S$ has the desired cardinality and, if we show that it is also an open packing of $G\boxtimes H$, then we have one inequality.

Every vertex from $A=I_G\times I_H$ is a singleton in $G\boxtimes H$, and so, at distance more than two away to any other vertex of $G\boxtimes H$. Let $g$ be an isolated vertex of $G$. The subgraph of $G\boxtimes H$ induced by $\{g\}\times (V(H)-I_H)$ is isomorphic to $H^-$. Moreover, every component of this subgraph is also a component of $G\boxtimes H$. Thus, the set $\{g\}\times (P_H^o-I_H)$ is a maximum open packing of these components. Therefore, also $B=I_G\times (P_H^o-I_H)$ is a maximum open packing of the subgraph induced by $I_G\times V(H^-)$, and $B$ is an open packing of $G\boxtimes H$. By symmetry, also the set $C=(P_G^o-I_G)\times I_H$ is an open packing of $G\boxtimes H$. Finally, $D=(P_G-I_G)\times (P_H-I_H)$ is an open packing of $(G-I_G)\boxtimes (H-I_H)$, because it is also a packing of $(G-I_G)\boxtimes (H-I_H)$ (see the remark after Theorem \ref{strong}). Because all four sets $A,B,C$ and $D$ belong to different components of $G\boxtimes H$, their union $S$ is an open packing of $G\boxtimes H$, and so $\rho_o(G\boxtimes H)\geq \rho(G^-)\rho(H^-)+i_G\rho_o(H)+i_H\rho_o(G)-i_Gi_H$.

For the upper bound we split $G\boxtimes H$ into four parts: $G^-\boxtimes H^-$, $G^-\boxtimes I_H$, $I_G\boxtimes H^-$ and $I_G\boxtimes I_H$. Let $P$ be a maximum packing of $G\boxtimes H$. Clearly, $I_G\times I_H\subseteq P$, since $I_G\times I_H$ consists of isolated vertices. The graph $I_G\boxtimes H^-$ is isomorphic to $i_G$ copies of $H^-$ and, if $P$ does not contain $i_G\rho_o(H^-)$ vertices from $I_G\boxtimes H^-$, then we immediately obtain a contradiction with the maximality of $P$. Similarly, $P$ contains exactly $i_H\rho_o(G^-)$ vertices from $G^-\boxtimes I_H$. Finally, for $G^-\boxtimes H^-$ we observe that every pair of adjacent vertices is contained in a triangle, because there are no isolated vertices in $G^-$, nor in $H^-$. Therefore, every open packing of $G^-\boxtimes H^-$ is also a packing of the same graph, and by Theorem \ref{strong}, we have $\rho_o(G^-\boxtimes H^-)=\rho(G^-\boxtimes H^-)\leq \min\{\rho(G^-)\gamma_f(H^-),\rho(H^-)\gamma_f(G^-)\}$. Everything together yields the desired upper bound which finishes the proof.
\end{proof}


\subsection{Direct product}

\ \ \ \ It is an easy observation that the open neighborhoods behave nicely in direct products
\begin{equation}\label{neigh-direct}
  N_{G\times H}((g,h))=N_G(g)\times N_H(h).
\end{equation}

For the distance formula in the direct product, see \cite{Kim}, we need first to define \textit{even distance} $d_G^e(u,v)$ and \textit{odd distance} $d_G^o(u,v)$ between two vertices $u$ and $v$ of $G$. The even distance $d_G^e(u,v)$ is the minimum even number of edges on a walk between $u$ and $v$, and if such a walk does not exist, then we set $d_G^e(u,v)=\infty$. Similarly, the odd distance $d_G^o(u,v)$ is the minimum odd number of edges on a walk between $u$ and $v$ and, if such a walk does not exist, then we set $d_G^o(u,v)=\infty$. In a connected bipartite graph $G$, clearly, only one of $d_G^e(u,v)$ and $d_G^o(u,v)$ is finite.

The distance between any two vertices $(g,h),(g',h')\in V(G\times H)$ is now given by
\begin{equation}\label{dist-direct}
  d_{G\times H}((g,h),(g',h'))=\min\{\max\{d_G^e(g,g'),d_H^e(h,h')\},\max\{d_G^o(g,g'),d_H^o(h,h')\}\}.
\end{equation}

Recall that by $G^-$ we denote the graph obtained from $G$ by removing all the isolated vertices.

\begin{theorem}
If $G$ and $H$ are graphs with $i_G$ and $i_H$ isolated vertices, respectively, then
$$\rho(G\times H)\geq\max\{\rho_o(G^-)\rho(H^-),\rho_o(H^-)\rho(G^-)\}+i_G|V(H)|+i_H|V(G)|-i_Gi_H.$$
\end{theorem}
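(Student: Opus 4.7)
My plan is to construct, by hand, a packing of $G\times H$ of the stated cardinality, and then to use the symmetry between $G$ and $H$ to finish.

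First I would use the adjacency formula $N_{G\times H}((g,h))=N_G(g)\times N_H(h)$ to observe that any vertex $(g,h)$ with $g\in I_G$ or $h\in I_H$ is isolated in $G\times H$. Hence the set $A=(I_G\times V(H))\cup(V(G)\times I_H)$ consists entirely of isolated vertices of $G\times H$ and trivially belongs to every maximum packing; by inclusion--exclusion, $|A|=i_G|V(H)|+i_H|V(G)|-i_Gi_H$.

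Next I would place extra packing vertices inside $G^-\times H^-$, which is an induced subgraph of $G\times H$ vertex-disjoint from $A$. Let $P_G^o$ be a $\rho_o(G^-)$-set and $P_H$ a $\rho(H^-)$-set, and set $P=P_G^o\times P_H$. I claim $P$ is a packing of $G^-\times H^-$. Suppose for contradiction that distinct $(g,h),(g',h')\in P$ satisfy $N_{G\times H}[(g,h)]\cap N_{G\times H}[(g',h')]\neq\emptyset$. If $(g,h)$ lies in $N_{G\times H}[(g',h')]$ (or vice versa), then $gg'\in E(G)$ and $hh'\in E(H)$, contradicting that $P_H$ is a packing of $H^-$ (so $h,h'$ cannot be adjacent). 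Otherwise some third vertex $(u,v)$ is a common neighbor, giving $u\in N_G(g)\cap N_G(g')$ and $v\in N_H(h)\cap N_H(h')$. If $g=g'$, then $h\ne h'$ and $v$ is a common neighbor of $h,h'$ in $H^-$, so $d_{H^-}(h,h')\le 2$, contradicting that $P_H$ is a packing. If $g\ne g'$, then $u$ is a common neighbor of two distinct vertices of $P_G^o$ in $G^-$, contradicting that $P_G^o$ is an open packing. Thus $P$ is a packing of $G^-\times H^-$, and since it is vertex-disjoint from $A$ which contains only isolated vertices, $P\cup A$ is a packing of $G\times H$. This yields
\[
\rho(G\times H)\ge \rho_o(G^-)\rho(H^-)+i_G|V(H)|+i_H|V(G)|-i_Gi_H.
\]

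Interchanging the roles of $G$ and $H$ (using an open packing $P_H^o$ of $H^-$ and a packing $P_G$ of $G^-$) gives the analogous bound with $\rho_o(H^-)\rho(G^-)$, and taking the maximum of the two lower bounds completes the proof. The main obstacle is the case analysis verifying that $P_G^o\times P_H$ is a packing; the key point is that the open-packing assumption on one factor is exactly what is needed to exclude a common $G$-neighbor of two distinct projections, while the packing assumption on the other factor rules out both direct adjacency of $(g,h)$ and $(g',h')$ and a common $H$-neighbor.
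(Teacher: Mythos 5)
Your proposal is correct, and the set you build is the same one the paper uses: the Cartesian set product of an open packing of one factor (minus isolated vertices) with a packing of the other, together with all vertices of $G\times H$ having an isolated coordinate, followed by the symmetric swap to get the maximum. (You put the open packing in $G$ and the packing in $H$; the paper does the reverse, which is immaterial.) Where you genuinely diverge is in the verification that the product set is a packing. The paper invokes the even/odd distance formula for the direct product and argues that $d_{G\times H}((g,h),(g',h'))\geq 3$ by a case analysis on $d_G^e,d_G^o,d_H^e,d_H^o$, which requires tracking parities of walks and is the most delicate part of their proof. You instead work directly from $N_{G\times H}((g,h))=N_G(g)\times N_H(h)$: adjacency of two chosen vertices would force an edge inside the packing $P_H$, a common neighbor with equal $G$-coordinates would force $d_H(h,h')\le 2$ inside $P_H$, and a common neighbor with distinct $G$-coordinates would force two vertices of the open packing $P_G^o$ to share a neighbor. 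This is more elementary, avoids the distance machinery entirely, and makes transparent exactly which hypothesis (packing versus open packing) is consumed at each step; the paper's distance-based route has the mild advantage of reusing a formula it has already stated for other purposes in the same section. Both arguments are complete; yours is arguably the cleaner one.
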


\begin{proof}
Let $P_G$ and $P_H$ denote a $\rho(G)$-set and a $\rho(H)$-set, respectively, let $P^o_G$ and $P^o_H$ denote a $\rho_o(G)$-set and a $\rho_o(H)$-set, respectively, and let $I_G$ and $I_H$ be the sets of isolated vertices from $G$ and $H$, respectively. Notice that $I_G\subseteq P_G, I_H\subseteq P_H, I_G\subseteq P^o_G$ and $I_H\subseteq P^o_H$. Without lost of generality, we may assume that $\rho_o(G^-)\rho(H^-)\leq\rho_o(H^-)\rho(G^-)$. We set
$$S=((P_G-I_G)\times (P^o_H-I_H))\cup (I_G\times V(H)) \cup (V(G)\times I_H).$$
The first set of this union is clearly disjoint with the other two. On the other hand, we have $(I_G\times V(H)) \cap (V(G)\times I_H)=I_G\times I_H$ and we have $|S|=\rho(G^-)\rho_o(H^-)+i_G|V(H)|+i_H|V(G)|-i_Gi_H$.

Now, notice that  $(I_G\times V(H)) \cup (V(G)\times I_H)$ is the set of all singletons of $G\times H$, and can therefore be in any packing set. It remains to see that $(P_G-I_G)\times (P^o_H-I_H)$ represents a packing of $G^-\times H^-$. Let $(g,h)$ and $(g',h')$ be any distinct vertices from $(P_G-I_G)\times (P^o_H-I_H)$. If $g\neq g'$, then $d_G(g,g')\geq 3$, because $g$ and $g'$ belong to a packing of $G$. If $h\neq h'$, then either $d_H(h,h')=1$ or $d_H(h,h')\geq 3$, because $h$ and $h'$ belong to an open packing of $H$. Moreover, if $d_H(h,h')=1$, then $h$ and $h'$ do not belong to a common triangle, as $P^o_H$ is an open packing. Since $(g,h)\neq (g',h')$, we have $g\neq g'$ or $h\neq h'$. Suppose for instance $g\neq g'$. Clearly, $d^o_G(g,g')\geq 3$ and $d_G^e(g,g')> 3$, and so, we have $\max\{d_G^o(g,g'),d_H^o(h,h')\}\geq 3$ and $\max\{d_G^e(g,g'),d_H^e(h,h')\}>3$. By (\ref{dist-direct}) we have $d_{G\times H}((g,h),(g',h'))\geq 3$. Thus, we may assume that $g=g'$ and $h\neq h'$. Clearly, $d^o_G(g,g')\geq 3$ and $d_G^e(g,g')=0$. As mentioned, either $d_H(h,h')\geq 3$, or $h,h'$ are adjacent but not in the same triangle. Since $h,h'\in P^o_H$, we have $d_H(h,h')\neq 2$, and as $h$ and $h'$ are not in a common triangle, we have $d^e_H(h,h')> 2$. Altogether we have $\max\{d_G^o(g,g'),d_H^o(h,h')\}\geq 3$ and $\max\{d_G^e(g,g'),d_H^e(h,h')\}>2$. By (\ref{dist-direct}), we have $d_{G\times H}((g,h),(g',h'))\geq 3$, and so, $(P_G-I_G)\times (P^o_H-I_H)$ is a packing of $(G-I_G)\times (H-I_H)$. Therefore, $S$ is a packing set of $G\times H$ and the proof is completed.
\end{proof}

\begin{corollary}
If $G$ and $H$ are graphs without isolated vertices, then
$$\rho(G\times H)\geq\max\{\rho_o(G)\rho(H),\rho_o(H)\rho(G)\}.$$
\end{corollary}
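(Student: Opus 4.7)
The plan is to observe that this corollary is an immediate specialization of the preceding theorem. When $G$ and $H$ have no isolated vertices, we have $i_G=0$ and $i_H=0$, so in particular $G^-=G$ and $H^-=H$. Substituting these values into the bound
$$\rho(G\times H)\geq\max\{\rho_o(G^-)\rho(H^-),\rho_o(H^-)\rho(G^-)\}+i_G|V(H)|+i_H|V(G)|-i_Gi_H,$$
the additive terms $i_G|V(H)|$, $i_H|V(G)|$ and $-i_Gi_H$ all vanish, and $G^-,H^-$ may be replaced by $G,H$. What remains is exactly
$$\rho(G\times H)\geq\max\{\rho_o(G)\rho(H),\rho_o(H)\rho(G)\},$$
which is the claim. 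So the entire proof consists of one sentence invoking the previous theorem with $i_G=i_H=0$; there is no real obstacle.
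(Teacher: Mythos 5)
Your proposal is correct and matches the paper's (implicit) justification exactly: the corollary is stated without proof precisely because it is the immediate specialization of the preceding theorem to $i_G=i_H=0$, in which case $G^-=G$, $H^-=H$, and all the additive correction terms vanish. Nothing further is needed.
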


The lower bound from last theorem is tight. In particular, it is well known that $G\times K_2\cong 2G$ for every bipartite graph $G$. Therefore,
$$\rho(G\times K_2)= 2\rho(G)=\max\{\rho_o(G),2\rho(G)\}=\max\{\rho_o(G)\rho(K_2),\rho_o(K_2)\rho(G)\}.$$

For the case of open packing sets, the direct product behaves nicer due to (\ref{neigh-direct}). Similarly to the case of packing number in the strong product, we expect equality to hold in the following theorem. However, again the proof for the upper bound seems very challenging, and we only present the lower bound.

\begin{theorem}
If $G$ and $H$ are graphs with $i_G$ and $i_H$ isolated vertices, respectively, then
$$\rho_o(G\times H)\geq\rho_o(G^-)\rho_o(H^-)+i_G|V(H)|+i_H|V(G)|-i_Gi_H.$$
\end{theorem}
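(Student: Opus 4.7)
The plan is to construct explicitly an open packing of $G\times H$ of the claimed size, by gluing together a ``product part'' built from maximum open packings of $G^-$ and $H^-$ with the isolated vertices of $G\times H$ coming from $I_G$ and $I_H$. Specifically, let $P_G$ be a $\rho_o(G^-)$-set and $P_H$ a $\rho_o(H^-)$-set (so $P_G\subseteq V(G^-)$ and $P_H\subseteq V(H^-)$), let $I_G, I_H$ be the sets of isolated vertices of $G$ and $H$, and set
$$S=(P_G\times P_H)\cup(I_G\times V(H))\cup(V(G)\times I_H).$$

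The cardinality count is a straightforward inclusion–exclusion. The first piece $P_G\times P_H$ is disjoint from the other two (since $P_G\cap I_G=\emptyset$ and $P_H\cap I_H=\emptyset$), while the remaining two pieces intersect exactly in $I_G\times I_H$. Thus $|S|=\rho_o(G^-)\rho_o(H^-)+i_G|V(H)|+i_H|V(G)|-i_Gi_H$, which matches the target.

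It remains to verify that $S$ is an open packing. The key observation, from (\ref{neigh-direct}), is that $N_{G\times H}((g,h))=N_G(g)\times N_H(h)=\emptyset$ whenever $g\in I_G$ or $h\in I_H$; hence every vertex of $(I_G\times V(H))\cup(V(G)\times I_H)$ is isolated in $G\times H$, and therefore trivially contributes nothing to any pairwise open-neighborhood intersection. So we only need to check the ``product part'' $P_G\times P_H$. Take any two distinct vertices $(g_1,h_1),(g_2,h_2)\in P_G\times P_H$ and suppose some $(g,h)$ lies in $N_{G\times H}((g_1,h_1))\cap N_{G\times H}((g_2,h_2))$; by (\ref{neigh-direct}) this forces $g\in N_G(g_1)\cap N_G(g_2)$ and $h\in N_H(h_1)\cap N_H(h_2)$. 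If $g_1=g_2$, then $h_1\ne h_2$ and the pair $h_1,h_2\in P_H$ produces a common neighbor, contradicting that $P_H$ is an open packing of $H^-$ (note $N_H(h_i)=N_{H^-}(h_i)$ since $h_i\notin I_H$). If $g_1\ne g_2$, the same argument on the $G$-coordinate yields a contradiction with $P_G$ being an open packing of $G^-$. Either way we have a contradiction, so $S$ is an open packing and $\rho_o(G\times H)\ge |S|$.

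There is no real obstacle here—the argument follows the same template as the preceding theorem on $\rho(G\times H)$ but is cleaner, since the formula $N_{G\times H}((g,h))=N_G(g)\times N_H(h)$ lets us handle open packings coordinate-by-coordinate without invoking the bipartite even/odd distance formula. The only points that require care are keeping the three pieces of $S$ disjoint (resp.\ correctly counted), and observing that $N_H(h_i)$ and $N_{H^-}(h_i)$ agree on non-isolated vertices (and analogously for $G$), so that ``open packing of $G^-$, $H^-$'' really does translate to pairwise disjoint open neighborhoods inside the full factors.
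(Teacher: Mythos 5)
Your proposal is correct and follows essentially the same route as the paper: the same set $S$ (the paper writes it as $((P_G-I_G)\times(P_H-I_H))\cup(I_G\times V(H))\cup(V(G)\times I_H)$ with $P_G,P_H$ maximum open packings of $G$ and $H$, which coincides with your construction since isolated vertices lie in every maximum open packing), the same inclusion--exclusion count, and the same appeal to $N_{G\times H}((g,h))=N_G(g)\times N_H(h)$. You merely spell out the coordinatewise common-neighbor argument that the paper leaves implicit behind the citation of that neighborhood formula.
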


\begin{proof}
Let $P_G$ and $P_H$ denote a $\rho_o(G)$-set and a $\rho_o(H)$-set, respectively, and let $I_G$ and $I_H$ be the sets of isolated vertices from $G$ and $H$, respectively. We set $$S=((P_G-I_G)\times (P_H-I_H))\cup (I_G\times V(H)) \cup (V(G)\times I_H).$$
The first set of this union is clearly disjoint with the other two. On the other hand, we have $(I_G\times V(H)) \cap (V(G)\times I_H)=I_G\times I_H$ and we have $|S|=\rho_o(G^-)\rho_o(H^-)+i_G|V(H)|+i_H|V(G)|-i_Gi_H$.

Notice that  $(I_G\times V(H)) \cup (V(G)\times I_H)$ is the set of all singletons of $G\times H$, and it is therefore contained in any maximum open packing. On the other hand, $(P_G-I_G)\times (P_H-I_H)$ represents an open packing of $(G-I_G)\times (H-I_H)$, by (\ref{neigh-direct}). Thus, $S$ is an open packing set of $G\times H$, and consequently, $\rho_o(G\times H)\geq \rho_o(G^-)\rho_o(H^-)+i_G|V(H)|+i_H|V(G)|-i_Gi_H$.
\end{proof}


\section{Rooted product graphs}

\ \ \ \ A \emph{rooted graph} is a graph in which one vertex is labeled in a special way to distinguish it from other vertices. The special vertex is called the \emph{root} of the graph. Let $G$ be a labeled graph on $n$ vertices. Let ${\cal H}$ be a sequence of $n$ rooted graphs $H_1,\ldots,H_n$. The \emph{rooted product graph} $G( {\cal H})$ is the graph obtained by identifying the root of $H_i$ with the $i^{th}$ vertex of $G$, see \cite{rooted-first}. We here consider the particular case of rooted product graphs where ${\cal H}$ consists of $n$  isomorphic rooted graphs \cite{Schwenk}. More formally, assuming that $V(G) = \{g_1,\ldots,g_n\}$ and that the root vertex of $H$ is $v$, we define the rooted product graph $G\circ_{v} H=(V,E)$, where $V=V(G)\times V(H)$ and
$$E=\displaystyle\bigcup_{i=1 }^n\{(g_i,h)(g_i,h'): \; hh'\in E(H)\}\cup \{(g_i,v)(g_j,v):\; g_ig_j\in E(G)\}.$$

Note that subgraphs induced by $H$-layers of $G\circ_v H$ are isomorphic to $H$. We next study the (open) packing number of rooted product graphs.

\begin{theorem}\label{pack-rooted}
Let $G$ be any graph of order $n$. If $H$ is any graph with root $v$, then
$$\rho(G\circ_v H)=\left\{\begin{array}{ll}
														\rho(G)+n(\rho(H)-1), &\mbox{$v\in P_H$ for every $\rho(H)$-set $P_H$},\\[0.2cm]
														n\rho(H), &\mbox{$v\notin P_H$ for some $\rho(H)$-set $P_H$}.
                          \end{array}
\right.$$
\end{theorem}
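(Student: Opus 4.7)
The plan is to combine the distance formula for rooted product graphs with a careful construction in each $H$-layer. For $(g_i,h),(g_j,h')\in V(G\circ_v H)$ with $g_i,g_j$ in the same component of $G$, the distance equals $d_H(h,h')$ when $i=j$, and equals $d_H(h,v)+d_G(g_i,g_j)+d_H(v,h')$ otherwise, because every path between distinct $H$-layers is forced to pass through the identified roots. Since each $H$-layer induces a copy of $H$, any packing $S$ of $G\circ_v H$ restricts to a packing $S_i=S\cap(\{g_i\}\times V(H))$ of $H$ in layer $i$, so $|S_i|\le\rho(H)$ and $|S|\le n\rho(H)$ in general.

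In the second case (some $\rho(H)$-set $P_H$ satisfies $v\notin P_H$), I place a copy of $P_H$ in every $H$-layer. For any $h\in P_H$ we have $d_H(h,v)\ge 1$, so whenever $i\ne j$ the inter-layer distance is at least $1+d_G(g_i,g_j)+1\ge 3$, while intra-layer distances are already at least $3$ since $P_H$ is a packing of $H$. This yields a packing of size $n\rho(H)$, meeting the general upper bound.

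For the first case (every $\rho(H)$-set contains $v$) I prove the upper bound first. Given a maximum packing $S$, let $A=\{i:|S_i|=\rho(H)\}$; for $i\in A$ the set $S_i$ is a $\rho(H)$-set, hence contains $v$ by hypothesis, so $(g_i,v)\in S$. The distance formula then gives $d_G(g_i,g_j)=d_{G\circ_v H}((g_i,v),(g_j,v))\ge 3$ for distinct $i,j\in A$, so $\{g_i:i\in A\}$ is a packing of $G$ and $|A|\le\rho(G)$. Hence $|S|\le\rho(G)\rho(H)+(n-\rho(G))(\rho(H)-1)=\rho(G)+n(\rho(H)-1)$. For the matching lower bound I fix a $\rho(G)$-set $P_G$ and a $\rho(H)$-set $P_H$ (which contains $v$), and take $P_H$ in every copy indexed by $P_G$ together with $P_H\setminus\{v\}$ in the remaining copies. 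The key structural fact is that, since $v\in P_H$ and $P_H$ is a packing in $H$, every $h'\in P_H\setminus\{v\}$ satisfies $d_H(v,h')\ge 3$. Consequently, inter-layer pairs with both indices in $P_G$ are handled by $d_G(g_i,g_j)\ge 3$, and any pair involving at least one non-$P_G$ layer gets an additive slack of $\ge 3$ from the $d_H(v,\cdot)$ term, so every inter-layer distance is at least $3$.

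The main obstacle I anticipate is precisely this verification in the mixed construction: a packed root $(g_i,v)$ with $g_i\in P_G$ can be adjacent, in $G$, to a vertex $g_j\notin P_G$ whose copy contributes vertices $h'\in P_H\setminus\{v\}$, and the only thing saving the distance $d_H(h_i,v)+d_G(g_i,g_j)+d_H(v,h')$ from dropping below $3$ is the bound $d_H(v,h')\ge 3$, which is available only because $v$ belongs to every $\rho(H)$-set. A minor subtlety, easily handled, is that when $G$ is disconnected the distance formula must be read componentwise, but since the argument is layerwise and only uses $d_G(g_i,g_j)\ge 1$ across distinct layers in the same component, this causes no difficulty.
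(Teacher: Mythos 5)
Your proof is correct and follows essentially the same route as the paper's: the same layerwise decomposition, the same two constructions for the lower bounds, and an upper-bound count in Case~1 that is equivalent to the paper's (you bound the number of layers meeting the packing in $\rho(H)$ vertices via the fact that their roots form a packing of $G$, while the paper bounds $|D\cap G^v|$ and each $|D\cap({}^gH\setminus\{(g,v)\})|$ separately). Your explicit use of the rooted-product distance formula merely makes rigorous the verifications the paper dismisses as ``easily seen.''
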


\begin{proof}
We distinguish two cases depending on the membership of $v$ to $\rho(H)$-sets.\\

\textit{Case 1.} Let $v$ belong to every $\rho(H)$-set $P_H$. Let $P'$ be any $\rho(G)$-set, and let now $P=(P'\times \{v\})\cup \left(V(G)\times (P_H\setminus \{v\})\right)$. We can easily see that $P$ is a packing set in $G\circ_v H$, which leads to $\rho(G\circ_v H) \ge \rho(G)+n(\rho(H)-1)$. On the other side, let $D$ be a $\rho(G\circ_v H)$-set. We note that the set $D_g=D\cap\: ^gH$ is a packing set in $(G\circ_v H)[^gH]$ for every $g\in V(G)$. Notice that $D_g$ is not a $\rho((G\circ_v H)[^gH])$-set for some $g\in V(G)$, since $v$ belongs to every $\rho(H)$-set (when $G$ contains at least one edge). Thus, we have that $|D\cap\: ^gH|=|D_g|\le \rho(H)-1$ if $(g,v)\notin D_g$, which means $|D\cap(^gH\setminus \{(g,v)\})|\le \rho(H)-1$. Also, if $(g,v)\in D_g$, then $|D\cap(^gH\setminus \{(g,v)\})|\le \rho(H)-1$ as well. In addition, $D\cap G^{v}$ is a packing set in $(G\circ_v H)[G^{v}]$ and hence $|D\cap G^v|\le \rho(G)$. As a consequence, we deduce the following
$$\rho(G\circ_v H)=|D|=|D\cap G^{v}|+\sum_{g\in V(G)}|D\cap(^gH\setminus \{(g,v)\})|\le \rho(G) + n(\rho(H)-1).$$
Therefore, $\rho(G\circ_v H)=\rho(G)+n(\rho(H)-1)$.\\

\textit{Case 2.} Suppose that there exists a $\rho(H)$-set $P_H$ not containing $v$. Let $P_H^g=\{g\}\times P_H$ for every $g\in V(G)$, and let $P''=\bigcup_{g\in V(G)}P_H^g$. It can be readily seen that $P''$ is a packing set in $G\circ_v H$, which means $\rho(G\circ_v H) \ge n\rho(H)$. On the other hand, let $P$ be a $\rho(G\circ_v H)$-set. It can be again easily observed that the set $P_g=P\cap\: ^gH$ is a packing set in $(G\circ_v H)[^gH]$ for every $g\in V(G)$. So, $\rho(H)=\rho((G\circ_v H)[^gH])\ge |P_g|$. As a consequence,
$$\rho(G\circ_v H)=|P|=\sum_{g\in V(G)}|P_g|\le \sum_{g\in V(G)}\rho(H) = n\rho(H),$$
which leads to $\rho(G\circ_v H)=n\rho(H)$.
\end{proof}

In \cite{BoPe} trees with unique maximum packing are described. Every leaf of such a tree must be contained in the unique maximum packing. If we choose a root to be a leaf in such a tree, then we end up in the first case of above theorem. Conversely, every neighbor $v$ of a leaf in a tree with unique maximum packing is not contained in the maximum packing. Therefore, if we choose $v$ to be the root of $H$ in such a tree, we end up in the second case of Theorem \ref{pack-rooted}.

By using similar techniques as in the theorem above, we can prove the following result for the case of open packing number. However, there are many different situations that must be remarked. For this we first introduce the following notation. Let $H$ be a graph and $v\in V(H)$. We say that the pair $(H,v)$ belongs to class $\cal{A}$ if $v\notin P_H$ for some $\rho_o(H)$-set $P_H$. On the other hand, $(H,v)\notin \cal{A}$ means that $v\in P_H$ for every $\rho_o(H)$-set $P_H$. We write $(H,v)\in \cal{B}$ if $\delta_{H[P_H]}(v)=0$ for some $\rho_o(H)$-set $P_H$. Clearly, $(H,v)\notin \cal{B}$ means that $\delta_{H[P_H]}(v)=1$ for every $\rho_o(H)$-set $P_H$. We also denote denote $H^*=H-N_H[v]$.

\begin{theorem}\label{open-pack-rooted}
Let $G$ be any graph of order $n$ with $i_G$ isolated vertices. If $H$ is any rooted graph with root $v$, then
$$\rho_o(G\circ_v H)=\left\{\begin{array}{ll}
						n\rho_o(H), &(H,v)\in \cal{A},\\[0.2cm]
						n(\rho_o(H)-1)+\rho_o(G), &\mbox{$(H,v)\notin \cal{A}$ and $(H,v)\in \cal{B}$},\\[0.2cm]
						n(\rho_o(H)-1)+\rho(G), &\mbox{$(H,v)\notin \cal{A}$ and $(H,v)\notin \cal{B}$ and $\rho_o(H^*)=\rho_o(H)-1$},\\[0.2cm]
						n(\rho_o(H)-1)+i_G, &\mbox{$(H,v)\notin \cal{A}$ and $(H,v)\notin \cal{B}$ and $\rho_o(H^*)\neq\rho_o(H)-1$}.
						
         \end{array}
\right.$$
\end{theorem}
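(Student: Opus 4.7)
The plan is to analyze a maximum open packing $P$ of $G\circ_v H$ through its restriction to each $H$-layer and to exploit the fact that the only edges connecting different $H$-layers go through the root vertex $v$. For every $g\in V(G)$ set $P^g=\{h\in V(H):(g,h)\in P\}$. Since $(G\circ_v H)[{}^gH]\cong H$, each $P^g$ is an open packing of $H$, so $|P^g|\le\rho_o(H)$. Let $R=\{g\in V(G):v\in P^g\}$. Two structural lemmas drive everything: first, $R$ must be an open packing of $G$, because a common $G$-neighbor $g_3$ of $g_1,g_2\in R$ would give $(g_3,v)$ as a common neighbor of $(g_1,v),(g_2,v)\in P$; second, whenever $g_1\in R$ and $g_2\in N_G(g_1)$, one must have $P^{g_2}\cap N_H(v)=\emptyset$, otherwise $(g_2,v)$ would be a common neighbor of $(g_1,v)$ and $(g_2,h)$ for some $h\in N_H(v)$.

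For $(H,v)\in\mathcal{A}$, pick a $\rho_o(H)$-set $P'_H$ not containing $v$; then $V(G)\times P'_H$ has no vertex of the form $(\cdot,v)$, so no cross-layer edge is involved and it is an open packing of size $n\rho_o(H)$. The matching upper bound $\sum_g|P^g|\le n\rho_o(H)$ is trivial. For $(H,v)\notin\mathcal{A}$ and $(H,v)\in\mathcal{B}$, note that for $g\notin R$ the set $P^g$ avoids $v$, so $|P^g|\le\rho_o(H)-1$; combined with $|R|\le\rho_o(G)$ this yields $|P|\le n(\rho_o(H)-1)+\rho_o(G)$. For the lower bound, take a $\rho_o(G)$-set $R'$ and a $\rho_o(H)$-set $P'_H$ with $v$ isolated in $H[P'_H]$ (provided by $\mathcal{B}$, so $P'_H\cap N_H(v)=\emptyset$); place $P'_H$ in every layer indexed by $R'$ and $P'_H\setminus\{v\}$ elsewhere. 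The cross-layer constraint (ii) is satisfied precisely because $P'_H\cap N_H(v)=\emptyset$.

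For $(H,v)\notin\mathcal{A}$ and $(H,v)\notin\mathcal{B}$, every $\rho_o(H)$-set contains $v$ together with exactly one neighbor of $v$. Hence if $g\in R$ has some $R$-neighbor in $G$, constraint (ii) forces $P^g\cap N_H(v)=\emptyset$, making $P^g$ an open packing containing $v$ with no neighbor in $P^g$, which cannot be a $\rho_o(H)$-set, so $|P^g|\le\rho_o(H)-1$. Only $g\in R\setminus N_G(R)$ can reach $\rho_o(H)$, and since $R\setminus N_G(R)$ is an open packing of $G$ containing no edge, it is in fact a packing of $G$; this yields the uniform upper bound $|P|\le n(\rho_o(H)-1)+\rho(G)$. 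In Case 3, $\rho_o(H^*)=\rho_o(H)-1$, and the lower bound is realized by choosing $R'$ a $\rho(G)$-set, a $\rho_o(H)$-set $P_H$, and a set $Q\subseteq V(H^*)$ of size $\rho_o(H)-1$ that is simultaneously an open packing of $H$: we place $P_H$ on the layers of $R'$ and $Q$ on all other layers (the fact that $Q\subseteq V(H^*)$ ensures the cross-layer compatibility around the roots in $R'$).

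In Case 4, $\rho_o(H^*)\ne\rho_o(H)-1$; the refined upper bound argument uses that for $g\in N_G(R)\setminus R$ one has $P^g\subseteq V(H^*)$, forcing $|P^g|\le\rho_o(H)-2$ (or less), and a charging argument (each non-isolated $g\in R\setminus N_G(R)$ "pays" through at least one adjacent vertex in $N_G(R)\setminus R$ whose contribution drops by $\ge 1$, and $R$ being an open packing means these payments are to distinct vertices) yields $|R\setminus N_G(R)|-|N_G(R)\setminus R|\le i_G$, whence $|P|\le n(\rho_o(H)-1)+i_G$. The lower bound is obtained simply by taking $R'=I_G$, the set of isolated vertices of $G$; placing a $\rho_o(H)$-set on those $i_G$ layers and $P_H\setminus\{v\}$ on the remaining $n-i_G$ layers produces an open packing of the required size, with no cross-layer interaction since isolated roots have no $G$-neighbors.

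The main obstacles are (a) in Case 3, proving that a $\rho_o(H^*)$-set of size $\rho_o(H)-1$ can be chosen so that it is also an open packing of the ambient graph $H$ (not merely of the subgraph $H^*$), which is what allows the $\rho(G)$ extra vertices to coexist with the $n-\rho(G)$ layers that must sit inside $V(H^*)$; and (b) in Case 4, the charging argument showing that any "gain" from including a non-isolated vertex of $G$ in $R\setminus N_G(R)$ is canceled by an equal "loss" on an adjacent layer forced into $V(H^*)$, so that only genuine isolated vertices of $G$ can give an actual improvement over the baseline $n(\rho_o(H)-1)$.
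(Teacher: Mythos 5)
Your decomposition by $H$-layers, the set $R$ of layers whose root lies in the packing, and your two structural lemmas are exactly the skeleton of the paper's proof, only packaged more transparently: the paper obtains $|R|\le\rho_o(G)$ in its Subcase 2.1 and $|R\setminus N_G(R)|\le\rho(G)$ in its Subcase 2.2 by contradiction rather than by your direct counting, and your Case 4 charging argument (the payments land on distinct vertices of $N_G(R)\setminus R$ precisely because $R$ is an open packing of $G$, with the isolated vertices of $G$ as the only unmatched gainers) is in fact spelled out more carefully than the paper's corresponding step, which stops at ``this contradicts the assumption.'' All four lower-bound constructions coincide with the paper's. As a strategy this is the right proof.

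However, the proposal is a sketch, and your self-declared obstacle (a) is a genuine gap that you name but do not close. The Case 3 construction needs a set $Q\subseteq V(H^*)$ of cardinality $\rho_o(H)-1$ that is an open packing of $H$, whereas the hypothesis only supplies a maximum open packing of the graph $H^*=H-N_H[v]$. These are not the same object: a vertex $w\in N_H(v)$, which is deleted in $H^*$, may have two neighbors $q_1,q_2$ in a $\rho_o(H^*)$-set, and then $(g,w)$ has the two neighbors $(g,q_1),(g,q_2)$ inside the layer $^gH$, so the construction fails for that choice of $Q$. The same identification is needed, in contrapositive form, for your Case 4 upper bound: you assert $|P^{g'}|\le\rho_o(H)-2$ for $g'\in N_G(R)\setminus R$ from $P^{g'}\subseteq V(H^*)$, which requires that $V(H^*)$ contain no open packing of $H$ of size $\rho_o(H)-1$; this does follow when $\rho_o(H^*)\le\rho_o(H)-2$ (an open packing of $H$ contained in $V(H^*)$ is an open packing of $H^*$), but the stated condition $\rho_o(H^*)\ne\rho_o(H)-1$ also admits, a priori, $\rho_o(H^*)\ge\rho_o(H)$, a case you do not exclude. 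To be fair, the paper's own proof makes exactly the same silent identification: it inserts an arbitrary $\rho_o(H^*)$-set into the layers and declares the verification easy, and it translates $\rho_o(H^*)\ne\rho_o(H)-1$ into ``every open packing of cardinality $\rho_o(H)-1$ of $H$ has at least one vertex in $N_H[v]$'' with an unjustified ``in other words.'' You have correctly located the crux of the argument, but locating an obstacle is not overcoming it, so the proof as written is incomplete at precisely this point.
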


\begin{proof}
Since every $H$-layer in $G\circ_v H$ is isomorphic to $H$, it follows that each $\rho_o(G\circ_v H)$-set intersects every $H$-layer in at most $\rho_o(H)$ vertices. Thus, $\rho_o(G\circ_v H)\leq n\rho_o(H)$. On the other hand, let $P_{H}$ be a $\rho_o(H)$-set. It is easy to see that $P=\bigcup_{g\in V(G)}(\{g\}\times(P_{H}\setminus\{v\}))$ is an open packing in $G\circ_v H$. Therefore, $n(\rho_o(H)-1)\leq|P|\leq \rho_o(G\circ_v H)$. We now distinguish two cases depending on the membership of $v$ to $\rho_o(H)$-sets.\\

\textit{Case 1.} $(H,v)\in \cal{A}$. There exists a $\rho_o(H)$-set not containing $v$. Let $P_{H}$ be such a set. We readily note that the set $P=\bigcup_{g\in V(G)}(\{g\}\times P_{H})$ is an open packing in $G\circ_v H$. Therefore, $n\rho_o(H)=|P|\leq \rho_o(G\circ_v H)$, which implies that $\rho_o(G\circ_v H)=n\rho_o(H)$ in this case.\\

\textit{Case 2.} $(H,v)\notin \cal{A}$. Thus $v$ belongs to every $\rho_o(H)$-set. In such a situation we must deal with two possibilities.\\

\textit{Subcase 2.1.} $(H,v)\in \cal{B}$. Hence there exists a $\rho_o(H)$-set $P_{H}$ for which $v$ is an isolated vertex of $H[P_{H}]$, which means that $\delta_{H[P_H]}(v)=0$. Let $P_{G}$ be a $\rho_o(G)$-set. It is easy to observe that $P'=\bigcup_{g\in V(G)}(\{g\}\times(P_{H}-\{v\}))\cup(\{v\}\times P_{G})$
is an open packing in $G\circ_v H$. Therefore, $\rho_o(G\circ_v H)\geq |P'|=n(\rho_o(H)-1)+\rho_o(G)$.

Suppose now that $\rho_o(G\circ_v H)>n(\rho_o(H)-1)+\rho_o(G)$ and let $P$ be a $\rho_{o}(G\circ_v H)$-set. Similarly to the proof of Theorem \ref{pack-rooted}, the set $P_g=P\cap\: ^gH$ is an open packing set of $(G\circ_v H)[^gH]$ for every $g\in V(G)$.
Moreover, $|P_g|=\rho_o(H)$, for at least $\rho_o(G)+1$ vertices $g\in V(G)$ because $|P|>n(\rho_o(H)-1)+\rho_o(G)$. Since $v$ belongs to every $\rho_o(H)$-set as $(H,v)\in \cal{A}$, it follows that at least $\rho_o(G)+1$ vertices $(g,v)\in V(G)\times\{v\}$ belong to $P$. This contradicts the fact that $P$ is an open packing in $G\circ_v H$. Therefore, $\rho_o(G\circ_v H)=|P'|=n(\rho_o(H)-1)+\rho_o(G)$.\\

\textit{Subcase 2.2.} $(H,v)\notin \cal{B}$. This means that $v$ has degree one in all subgraphs induced by $\rho_o(H)$-sets $P_{H}$, that is $\delta_{H[P_H]}(v)=1$. We first prove that $\rho_o(G\circ_v H)$ can be bounded from above by $n(\rho_o(H)-1)+\rho(G)$. Suppose to the contrary that $\rho_o(G\circ_v H)>n(\rho_o(H)-1)+\rho(G)$. Again, let $P$ be a $\rho_{o}(G\circ_v H)$-set and let $P_g=P\cap\: ^gH$ for any $g\in V(G)$. Note that $|P_g|=\rho_{o}(H)$, for at least $\rho(G)+1$ vertices $g\in V(G)$. Hence, at least $\rho(G)+1$ vertices, say $(g_{1},v),\cdots,(g_{\rho(G)+1},v)\in V(G)\times\{v\}$, belong to $P$. Since $\{(g_{1},v),\cdots,(g_{\rho(G)+1},v)\}$ is not a packing in $(G\circ_v H)[G^{v}]$ (note that $\{(g_{1},v),\cdots,(g_{\rho(G)+1},v)\}$ is an open packing in $(G\circ_v H)[G^{v}]$), there exists an edge $(g_{i},v)(g_{j},v)$ in $(G\circ_v H)[G^{v}]$ for some $1\leq i<j\leq \rho(G)$. This implies that both $(g_{i},v)$ and $(g_{j},v)$ have two neighbors in $P$, a contradiction. Therefore,
\begin{equation}\label{EEQ}
\rho_o(G\circ_v H)\leq n(\rho_o(H)-1)+\rho(G).
\end{equation}
From now on, we need to divide the reasoning into two more possibilities.\\

\textit{Subcase 2.2.1.} There exists an open packing $P'_{H^*}$ of cardinality $\rho_o(H^*)=\rho_o(H)-1$ in $H^*$. Let $P_{G}$ and $P^{o}_{H}$ be a $\rho(G)$-set and a $\rho_o(H)$-set, respectively. It is easily verified that
$$P''=\left(\bigcup_{g\in P_{G}}(\{g\}\times P^{o}_{H})\right)\cup \left(\bigcup_{g\in V(G)\setminus P_{G}}(\{g\}\times P'_{H^{*}})\right)$$
is an open packing in $G\circ_v H$. Therefore, $\rho_o(G\circ_v H)\geq|P''|=\rho(G)\rho_o(H)+(n-\rho(G))(\rho_o(H)-1)=n(\rho_o(H)-1)+\rho(G)$. We now have $\rho_o(G\circ_v H)=n(\rho_o(H)-1)+\rho(G)$, by (\ref{EEQ}).\\

\textit{Subcase 2.2.2.} $H$ does not satisfy the statement of Subcase 2.2.1. This means that every $\rho_o(H^*)$-set has cardinality $\rho_o(H^*)\neq\rho_o(H)-1$. In other words, every open packing of cardinality $\rho_o(H)-1$ of $H$ has at least one vertex in $N_H[v]$. Let $P$ be a $\rho_o(G\circ_v H)$-set. We have exactly $i_G$ components of $G\circ_v H$ isomorphic to $H$, where every component has exactly $\rho_o(H)$ vertices in $P$, and one component isomorphic to $G^-\circ_v H$ (recall that we obtain $G^-$ from $G$ by deleting all isolated vertices). In this case let $P^-=P\cap V(G^-\circ_v H)$ and let $P^-_g=P^-\cap\: ^gH$ for any $g\in V(G^-)$. We claim that $\rho_o(G^-\circ_v H)=(n-i_G)(\rho_o(H)-1)$. Suppose to the contrary that $\rho_o(G^-\circ_v H)>(n-i_G)(\rho_o(H)-1)$. Therefore, at least one subgraph $(G^-\circ_v H)[^{g}H]$, for some $g\in V(G^-)$, has exactly $\rho_o(H)$ vertices of $P^{-}$. Since $G^-$ has no isolated vertices, there exists a vertex $g'$ such that $gg'\in E(G^-)$. If $|P^-_{g'}|=\rho_o(H)$, then both $(g,v)$ and $(g',v)$ have two neighbors in $P^-$, which is a contradiction. Therefore, $|P^-_{g'}|\leq \rho_o(H)-1$ for all $g'\in N_{G^-}(g)$. Let now $g'$ be an arbitrary neighbor of $g$ in $G^-$. If $|P^-_{g'}|=\rho_o(H)-1$, then $N_{(G\circ_v H)[\:^{g'}H]}[(g',v)]\cap P^-_{g'}\neq\emptyset$. This implies that $|N_{G^-\circ_v H}(g,v)\cap P^-|\geq2$ or that $|N_{G^-\circ_v H}(g',v)\cap P^-|\geq2$, which is a contradiction. Therefore, $|P^-_{g'}|\leq \rho_o(H)-2$. The above argument guarantees that for every vertex $g\in V(G)$ such that $|P^-_{g}|=\rho_o(H)$, we have $|P^-_{g'}|\leq \rho_o(H)-2$ for all $g'\in N_{G^-}(g)$. This contradicts the assumption $\rho_o(G^-\circ_v H)>(n-i_G)(\rho_o(H)-1)$. Thus, $\rho_o(G^-\circ_v H)\leq (n-i_G)(\rho_o(H)-1)$. Therefore, it follows that $\rho_o(G^-\circ_v H)=(n-i_G)(\rho_o(H)-1)$ by using the corresponding inequality obtained at the first steps of the proof. We conclude this proof by the following computation
$$\rho_o(G\circ_v H)=\rho_o(G^-\circ_v H)+i_G\rho_o(H)=(n-i_G)(\rho_o(H)-1)+i_G\rho_o(H)=n(\rho_o(H)-1)+i_G.$$
\end{proof}

We end with four different examples that illustrate four cases of Theorem \ref{open-pack-rooted}. First let $H$ be a path $P_{4k+2}=v_1\ldots v_{4k+2}$ and let $v\in\{v_{4j-1},v_{4j}\}$ for some $j\in\{1,\ldots,k\}$. It is easy to see that there exists a unique $\rho_o(P_{4k+2})$-set of cardinality $2k+2$ not containing $v$. Hence $(P_{4k+2},v)\in\cal{A}$ and we have $\rho_o(G\circ_v P_{4k+2})=n\rho_o(P_{4k+2})=n(2k+2)$, where $n=|V(G)|$.

For the same reason (unique $\rho_o(P_{4k+2})$-set) we get for $H\cong P_{4k+2}$ and $v\in\{v_1,v_2,v_{4k+1},v_{4k+2}\}$ that $(H,v)\notin \cal{A}$ and $(H,v)\notin \cal{B}$ and $\rho_o(H^*)<\rho_o(H)-1$. Again by Theorem \ref{open-pack-rooted} we have $\rho_o(G\circ_v P_{4k+2})=n(\rho_o(P_{4k+2})-1)=n(2k+1)$, where $n=|V(G)|$.

Let now $H$ be a path $P_{4k+1}$ and let $v=v_1$. It is easy to see that $P=\{v_1\}\cup\{v_{4j},v_{4j+1}:1\leq j\leq k\}$ is a $\rho_o(P_{4k+1})$-set of cardinality $2k+1$ and that $v\in P$ such that $\delta_{H[P]}(v)=0$. Hence $(H,v)\notin \cal{A}$, $(H,v)\in \cal{B}$ and we have $\rho_o(G\circ_v P_{4k+1})=n(\rho_o(P_{4k+1})-1)+\rho_o(G)=2kn+\rho_o(G)$.

Finally, let $H$ be isomorphic to the complete graph $K_k$, $k\geq 4$, with one additional vertex $u$ adjacent to one vertex, say $v_1$, from $K_k$. Clearly, $u$ and $v_1$ form a unique $\rho_o(H)$-set and $\rho(H-\{u,v_1\})=1$. Thus, if we choose $v=u$, then the conditions $(H,v)\notin \cal{A}$ and $(H,v)\notin \cal{B}$ and $\rho_o(H^*)=\rho_o(H)-1$ are fulfilled. By the third option of Theorem \ref{open-pack-rooted} we have $\rho_o(G\circ_v H)=n(\rho_o(H)-1)+\rho(G)=n+\rho_o(G)$, where $n=|V(G)|$.



\begin{thebibliography}{}

\bibitem{biggs-1973} N. Biggs. Perfect codes in graphs. \emph{J.\ Combin.\ Theory Ser.\ B}, 15:289--296, 1973.

\bibitem{BoPe} D. Bo\v{z}ovi\'c and I. Peterin. Unique packing of graphs. \emph{Discuss. Math. Graph Theory}, in press, https://doi.org/10.7151/dmgt.2304.

\bibitem {b} B. Bre\v{s}ar, P. Dorbec, W. Goddard, B.L. Hartnell, M.A. Henning, S. Klav\v{z}ar, and D.F. Rall. Vizing's conjecture: a survey and recent results. \emph{J. Graph Theory}, 69:46--76, 2012.

\bibitem{CFHV} M. Chellali, O. Favaron, A. Hansberg, and L. Volkmann. $k$-domination and $k$-independence in graphs: A survey. \emph{Graphs Combin.}, 28:1-55, 2012.

\bibitem{DoHiLe} M.P. Dobson, E. Hinrichsen, and V. Leoni. Generalized limited packings of some graphs with a limited number of $P_4$ partners. \emph{Theoret. Comput. Sci.}, 579:1--8, 2015.

\bibitem{Dutt} R.D. Dutton. Global domination and packing numbers. \emph{Ars Combin.}, 101:489--501, 2011.

\bibitem{FiJa} J.F. Fink and M.S. Jacobson. $n$-domination, $n$-dependence and forbidden subgraphs. In \emph{Graph Theory with Applications to Algorithms and Computer Science}, Wiley, New York, 1985, 301--311.

\bibitem{GaZv} A. Gagarin and V. Zverovich. The probabilistic approach to limited packings in graphs. \emph{Discrete Appl. Math.}, 184:146--153, 2015.

\bibitem {gghr} R. Gallant, G. Gunther, B.L. Hartnell, and D.F. Rall. Limited packing in graphs. \emph{Discrete Appl. Math.}, 158:1357--1364, 2010.

\bibitem{rooted-first} C.D. Godsil and B.D. McKay. A new graph product and its spectrum. \emph{Bull. Austral. Math. Soc.}, 18:21--28, 1978.

\bibitem{ImKl} R. Hammack, W. Imrich, and S. Klav\v{z}ar, \emph{Handbook of product graphs, Second Edition}, CRC Press, Boca Raton, FL, 2011.

\bibitem {hr} B.L. Hartnell and D.F. Rall. Domination in Cartesian products: Vizing's conjecture. In \emph{Domination in Graphs-Advanced Topics}, (Haynes, et al, eds.) Marcel Dekker, Inc, New York, 1998, 163--189.

\bibitem {hs}  M.A. Henning and P.J. Slater. Open packing in graphs. \emph{J. Combin. Math. Combin. Comput.}, 29:3--16, 1999.

\bibitem{HeLoRa} M.A. Henning, C. L\"owenstein, and D. Rautenbach. Dominating sets, packings, and the maximum degree. \emph{Discrete Math.}, 311:2031--2036, 2011.

\bibitem{hms} S.M. Hosseini Moghaddam, D.A. Mojdeh, and B. Samadi. Total limited packing in graphs. \emph{Fasc. Math.}, 56:121--127, 2016.

\bibitem{JSRz} K. Junosza-Szaniawski and P. Rz\k{a}\.{z}ewski. On the number of $2$-packings in a connected graph. \emph{Discrete Math.}, 312:3444--3450, 2012.

\bibitem {kps} A.P. Kazemi, B. Pahlavsay, and R.J. Stones. Cartesian product graphs and $k$-tuple total domination. \emph{Filomat}, 32:6713--6731, 2018.

\bibitem{KPY} S. Klav\v{z}ar, I. Peterin, and I.G. Yero. Graphs that are simultaneously efficient open domination and efficient closed domination graphs. \emph{Discrete Appl. Math.}, 217:613--621, 2017.

\bibitem{Kim} S.-R. Kim. Centers of a tensor composite graph. \emph{Congr. Numer.}, 81:193--203, 1991.

\bibitem{KPRT} T. Kraner \v{S}umenjak, I. Peterin, D.F. Rall, and A. Tepeh. Partitioning the vertex set of $G$ to make $G\Box H$ an efficient open domination graph. \emph{Discrete Math. Theoret. Comput. Sci.}, 18:\#10, 2016.

\bibitem{KuPeYe1} D. Kuziak, I. Peterin, and I.G. Yero. Efficient open domination in graph products. \emph{Discrete Math. Theoret. Comput. Sci.}, 16:105--120, 2014.

\bibitem{MeMo} A. Meir and J. W. Moon. Relations between packing and covering numbers of a tree. \emph{Pacific J. Math.}, 61:225--233, 1975.

\bibitem{msh} D.A. Mojdeh, B. Samadi, and S.M. Hosseini Moghaddam.  Limited packing vs tuple domination in graphs. \emph{Ars Combin.}, 133:155--161, 2017.

\bibitem{MSKG}  D.A. Mojdeh, B. Samadi, A. Khodkar, and H.R. Golmohammadi. On thepacking numbers in graphs. \emph{Australas. J. Combin.}, 71:468--475, 2018.

\bibitem{Rall} D.F. Rall. Total domination in categorical products of graphs. \emph{Discuss. Math. Graph Theory}, 25:35--44, 2005.

\bibitem {hsa0} I. Sahul Hamid and S. Saravanakumar. Packing parameters in graphs. \emph{Discuss. Math. Graph Theory}, 35:5--16, 2015.

\bibitem{Schwenk} A.J. Schwenk. Computing the characteristic polynomial of a graph. \emph{Graphs Combin.} (R. Bari and F. Harary, eds.), Springer, Berlin, 1974, 153--172.

\bibitem {s} B. Samadi. On the $k$-limited packing numbers in graphs, \emph{Discrete Optim.}, 22:270--276, 2016.

\bibitem {v} V.G. Vizing. Some unsolved problems in graph theory, \emph{Uspehi Mat. Nauk.}, 23:117--134, 1968.

\end{thebibliography}
\end{document}